\theoremstyle{plain}
\newtheorem{theorem}{Theorem}[section]
\newtheorem{lemma}[theorem]{Lemma}
\newtheorem*{conjecture}{Conjecture}
\theoremstyle{definition}
\newtheorem{definition}{Definition}[section]
\newtheorem{example}{Example}[section]
\newtheorem{counter}{Counter-Example}[section]
\newtheorem{assumption}{Assumption}[section]
\theoremstyle{remark}
\numberwithin{equation}{section}
\newcommand{\R}{{\mathbb R}}
\begin{document}

\title{On a conjecture in second-order optimality conditions}

\author{Roger Behling\footnote{Federal University of Santa Catarina, Blumenau-SC, Brazil. Email: rogerbehling@gmail.com}\and
  Gabriel Haeser\footnote{Department of Applied Mathematics, University of S\~ao Paulo, S\~ao Paulo-SP, Brazil. This research was partially conducted while holding a Visiting Scholar position at Department of Management Science and Engineering, Stanford University, Stanford CA, USA. Email: ghaeser@ime.usp.br \Letter} \and Alberto Ramos\footnote{Department of Mathematics,
     Federal University of Paran\'a, Curitiba, PR, Brazil.
     e-mail: albertoramos@ufpr.br.}
    \and Daiana S. Viana\footnote{Federal University of Acre, Center of Exact and Technological Sciences, Rio Branco-AC, Brazil. PhD student at Department of Applied Mathematics, University of S\~ao Paulo-SP, Brazil. Email: daiana@ime.usp.br}
}

\date{June 16th, 2016. Last reviewed on June 23rd, 2017.}
 
\maketitle

\begin{abstract}
In this paper we deal with optimality conditions that can be verified by a nonlinear optimization algorithm, 
where only a single Lagrange multiplier is avaliable.
In particular, we deal with a conjecture formulated in 
[{\it R. Andreani, J.M. Mart\'{\i}nez, M.L. Schuverdt, ``On second-order optimality conditions for nonlinear programming'', Optimization, 56:529--542, 2007}], 
which states that whenever a local minimizer of a nonlinear optimization problem 
fulfills the Mangasarian-Fromovitz Constraint Qualification and the 
rank of the set of gradients of active constraints increases at most by one in a neighborhood of the minimizer, a second-order optimality condition that depends on one single Lagrange multiplier is satisfied. 
This conjecture generalizes previous results under a constant rank assumption or under a rank deficiency of at most one. 
In this paper we prove the conjecture under the additional 
assumption that the Jacobian matrix has a smooth singular value decomposition, which is weaker than previously considered assumptions. We also review previous literature related to the conjecture.
\end{abstract}

\noindent {\bf Keywords:}
Nonlinear optimization, Constraint qualifications, Second-order optimality
conditions, Singular value decomposition.

\noindent {\bf AMS Classification:} 90C46, 90C30

\pagestyle{myheadings}
\thispagestyle{plain}
\markboth{}{R. Behling, G. Haeser, A. Ramos, D. S. Viana, On a conjecture in second-order
optimality conditions}

\section{Introduction}
This paper considers a conjecture about second-order necessary optimality conditions for 
constrained optimization. Our interest in such conjecture comes from 
practical considerations. Numerical optimization 
deals with the design of algorithms with the aim of finding a point with the lowest possible value of a certain function over a constraint set.
Useful tools for the design of algorithms are the necessary optimality conditions, i.e., 
conditions satisfied by every local minimizer. 
Not all necessary optimality conditions serve  that purpose. %, 
%since the constraint qualification associated to it (if any), 
%may be too weak to guarantee a global convergence proof to stationarity. 
%Also, 
Optimality conditions must be %easily 
computable with the information provided by the algorithm, 
where its fulfillment indicates that the considered point is an acceptable solution.
For constrained optimization problems, 
the Karush-Kuhn-Tucker (KKT) conditions are the basis for most optimality conditions. 
In fact, most algorithms for constrained optimization are iterative and in their 
implementation,  
the KKT conditions serve as a theoretical guide for developing suitable 
stopping criteria. 
For more details, see \cite[Framework 7.13, page 513]{nocedal}, \cite[Chapter 12]{fletcher} and \cite{amrs}.

Necessary optimality conditions can be of first- or second-order 
depending on whether the first- or second-order derivatives are used in the formulation.
When the second-order information is avaliable, 
one can formulate second-order conditions. Such
conditions are much stronger than first-order ones and hence are mostly desirable, since they allow us to rule out possible non-minimizers accepted as solution when we only use first-order information. 

%Most 
Global convergence proofs of second-order algorithms are based on second-order necessary optimality condition of the form: If a local
minimizer satisfies some constraint qualification, then the WSOC condition holds,
where WSOC stands for the Weak Second-order Optimality Condition, that states that the Hessian of
the Lagrangian at a KKT point, for some Lagrange multiplier, is positive semidefinite on the subspace orthogonal to the gradients of
active constraints, see Definition \ref{def:wsoc}. 

Thus, we are interested in assumptions guaranteeing that local minimizers satisfy WSOC, given its implications to numerical algorithms.
The conjecture comes along these lines. In order to precisely state the conjecture, we need some definitions. 

Consider the nonlinear constrained optimization problem
%\cite{amrs, amrs2}
\begin{equation}
\label{problem}
  %\mbox{minimize} f(x)
            \begin{array}{lll}
           \mbox{minimize   } & f(x), \\ 
           \mbox{subject to } & h_{i}(x)  = 0 \ \ \forall i \in \mathcal{E}:=\{1,\dots, m\},\\
                              & g_{j}(x) \leq 0 \ \ \forall j \in \mathcal{I}:=\{1,\dots,p\}, \\
                              
            \end{array}
\end{equation}
where $f$, $h_{i}$, $g_{j}: \mathbb{R}^{n} \rightarrow \mathbb{R}$ are assumed to be, 
at least, twice continuously differentiable functions.

Denote by $\Omega$ the feasible set of \eqref{problem}.
For a point $x \in \Omega$, we define $A(x):=\{j \in \mathcal{I}: g_{j}(x)=0\}$ to denote the set of indices of active inequalities.
A feasible point $x^*$ satisfies the Mangasarian-Fromovitz Constraint Qualification (MFCQ) 
if $\{\nabla h_i(x^*):i\in\mathcal{E}\}$ 
is a linearly independent set and there is a direction 
$d\in\R^n$ such that $\nabla h_i(x^*)^\mathtt{T}d=0, i \in \mathcal{E}$ and $\nabla g_j(x^*)^\mathtt{T}d<0$, 
$j\in A(x^*)$. %The conjecture is related to a condition based on MFCQ.
Define by $J(x)$ 
the matrix whose first $m$ rows are formed by $\nabla h_i(x)^\mathtt{T}$, $i\in\mathcal{E}$ and the
remaining rows by $\nabla g_j(x)^\mathtt{T}, j\in A(x^*)$.

In \cite{ams2}, with the aim of 
stating
a verifiable condition guaranteeing global convergence of a second-order
augmented Lagrangian method to a second-order stationary point, 
the authors proposed a new condition \cite[Section 3]{ams2} suitable for that purpose. 
Furthermore, based on \cite{baccaritrad} and their recently proposed condition,  %straint qualification, 
they stated the following conjecture, see \cite[Section 5]{ams2}:

\begin{conjecture}
Let $x^*$ be a local minimizer of (\ref{problem}). Assume that:
\begin{enumerate}
 \item MFCQ holds at $x^{*}$, 
 \item the 
rank of 
$\{\nabla h_i(x), \nabla g_j(x): i \in \mathcal{E}; j \in A(x^*)\}$
is at most $r+1$ in a neighborhood of $x^*$, 
where $r$ is the rank of 
$\{\nabla h_i(x^*), \nabla g_j(x^*): i \in \mathcal{E}; j \in A(x^*)\}$.
\end{enumerate}
Then, there exists a Lagrange multiplier $(\lambda,\mu)\in\R^m\times\R^p_+$ such that
\begin{equation}
   \label{eqn:wsoc1}
   \nabla f(x^{*})+
   \sum_{i=1}^{m}\lambda_{i}\nabla h_{i}(x^{*})+
   \sum_{j=1}^{p}\mu_{j}\nabla g_{j}(x^{*})=0 \ \ \text{ with } \ \ \mu_{j} g_{j}(x^{*})=0, \forall j, 
\end{equation}
and for every $d \in \mathbb{R}^{n}$ such that 
$\nabla h_i(x^*)^\mathtt{T}d=0$, $\forall i$; $\nabla g_j(x^*)^\mathtt{T}d=0$, $\forall j \in A(x^*)$, 
we have
\begin{equation}
\label{eqn:wsoc2}
        d^{\mathtt{T}}
        (
        \nabla^{2} f(x^{*})+
        \sum_{i=1}^m \lambda_{i} \nabla^{2} h_{i}(x^{*})+
        \sum_{j=1}^p \mu_j \nabla^{2} g_{j}(x^{*})
        ) d\geq 0.
\end{equation}
\end{conjecture}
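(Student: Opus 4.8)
\medskip
\noindent\textbf{Proposed proof strategy.}
The plan is to exploit the smooth singular value decomposition $J(x)=U(x)\Sigma(x)V(x)^{\mathtt T}$, valid on a neighborhood of $x^*$, to reduce the statement to a low-dimensional problem on a manifold in which there is essentially a single troublesome constraint direction, and then to split the analysis according to whether the rank is locally constant or actually increases. After a preliminary reduction that incorporates the active inequalities (a standard step, but one that needs care so as not to lose the sign conditions on their multipliers), I would collect the equality functions and the functions $g_j$, $j\in A(x^*)$, into a vector $c$, and replace $c$ by $\tilde c(x):=U(x)^{\mathtt T}c(x)$. Since $c(x^*)=0$, the derivative of $U$ drops out at $x^*$ and $\nabla\tilde c_k(x^*)=\sigma_k(x^*)\,v_k(x^*)$; hence the first $r$ transformed constraints have linearly independent gradients at $x^*$, the remaining ones have vanishing gradient there, and the rank hypothesis forces $\sigma_k\equiv 0$ near $x^*$ for $k\ge r+2$, so that only the direction $v_{r+1}(x)$ can survive. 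MFCQ also yields that $\nabla f(x^*)$ already lies in the span of $\{\nabla\tilde c_1(x^*),\dots,\nabla\tilde c_r(x^*)\}$, so the multipliers of the first $r$ constraints are uniquely determined while those of the rest are free. Applying the implicit function theorem to the first $r$ (regular) constraints, I pass to the $(n-r)$-dimensional manifold $M_0=\{\tilde c_1=\dots=\tilde c_r=0\}$ and a base point $0$, pulling $f$ back to $F$ with $\nabla F(0)=0$, the critical subspace $S$ becoming the whole tangent space $T_0M_0$, and the surviving constraints pulling back to maps $C_k$ whose gradients vanish at $0$ and, by the inherited smooth decomposition, all point in a single common direction $\psi(y)$.

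With this normal form in hand I would argue by dichotomy. If the rank of $J$ is constant equal to $r$ near $x^*$, then $\psi\equiv 0$, the $C_k$ are locally trivial, $0$ is an unconstrained local minimizer of $F$ on $M_0$, hence $\nabla^2F(0)\succeq 0$; setting all free multipliers to zero and transporting back through the change of variables gives WSOC with a single multiplier, recovering the classical constant-rank result. If instead the rank equals $r+1$ at points arbitrarily close to $x^*$, the common zero set of the $C_k$ inside $M_0$ contains a hypersurface $N$ through $0$ with normal $\psi(0)$; since $N$ is contained in the feasible set and $\nabla F(0)=0$, it follows that $\nabla^2F(0)$ is positive semidefinite on $T_0N=\psi(0)^{\perp}$. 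To upgrade this to positive semidefiniteness on all of $T_0M_0$ it is enough to add to $\nabla^2F(0)$ a sufficiently large multiple of the rank-one matrix $\psi(0)\psi(0)^{\mathtt T}$, and such a matrix is available as a combination $\sum_k\bar y_k\nabla^2C_k(0)$ --- each $\nabla^2C_k(0)$ being a scalar multiple of $\psi(0)\psi(0)^{\mathtt T}$ --- as soon as some $\nabla^2C_k(0)\ne 0$; choosing the corresponding free multipliers accordingly and transporting everything back produces the desired single Lagrange multiplier.

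The hard part will be the degenerate sub-case of the second alternative, in which every $\nabla^2C_k(0)$ vanishes: then the free multipliers no longer supply the required rank-one correction, and one cannot move off $N$ while remaining feasible, so the information that $x^*$ is a local minimizer must be extracted along feasible arcs that leave $N$ to higher order. I expect to handle this either by such a higher-order arc construction or by a limiting argument --- perturbing the objective, taking nearby approximate minimizers at which the constraint structure is nondegenerate, extracting their Lagrange multipliers (bounded thanks to MFCQ) and passing to the limit, using the smooth dependence of $v_{r+1}(x)$ to keep the construction under control and to obtain a multiplier valid uniformly over all of $S$. A further point to monitor throughout is that the transformation $\tilde c=U^{\mathtt T}c$ and the manifold reduction introduce Hessian corrections coming from differentiating $U$, $V$ and the implicit-function parametrization; one must verify that these are absorbed into the multiplier terms and do not spoil the semidefiniteness. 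The smoothness of the singular value decomposition is precisely what makes all of these constructions --- the transformation, the isolation of the single direction $v_{r+1}$, and the limiting arguments --- possible, which is why it is the natural hypothesis under which to attack the conjecture.
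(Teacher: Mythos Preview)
Your geometric reduction is a genuinely different route from the paper's. The paper never passes to a submanifold or uses the implicit function theorem; instead it differentiates the identity $J(x)=U(x)\Sigma(x)V(x)^{\mathtt T}$ directly and shows by a short computation that, for $d\in S(x^*)$, the value $d^{\mathtt T}\nabla^2_{xx}L(x^*,\mu)d$ depends on $\mu\in\Lambda(x^*)$ only through a \emph{single} real parameter $t_{r+1}$ (the coefficient of $u_{r+1}(x^*)$ in $\mu-\bar\mu$). Combining this with the Bonnans--Shapiro second-order condition under MFCQ (for each $d$ some multiplier gives nonnegativity), compactness of $\Lambda(x^*)$, and affinity in $t_{r+1}$, one reduces to $\max\{d^{\mathtt T}Pd,\,d^{\mathtt T}Qd\}\ge0$ for two fixed quadratic forms and finishes with Yuan's Lemma. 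Your reduction in effect rediscovers the one-dimensionality --- after your change of variables the only available Hessian corrections are multiples of $\psi(0)\psi(0)^{\mathtt T}$ --- but you then try to close the argument without either of these two ingredients, and that is where it breaks.

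The step ``it is enough to add to $\nabla^2F(0)$ a sufficiently large multiple of $\psi(0)\psi(0)^{\mathtt T}$'' is false as pure linear algebra: with $A=\left(\begin{smallmatrix}0&1\\1&0\end{smallmatrix}\right)$ and $v=e_1$, the matrix $A$ is positive semidefinite on $v^\perp$ yet $A+t\,vv^{\mathtt T}$ has determinant $-1$ for every $t$. Knowing only that $\nabla^2F(0)\succeq0$ on $T_0N=\psi(0)^\perp$ (minimality along $N$) says nothing about directions with a nonzero $\psi(0)$-component, and since $\Lambda(x^*)$ is bounded under MFCQ you could not take $t$ arbitrarily large anyway. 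What rules such configurations out at an actual minimizer is precisely the Bonnans--Shapiro condition, and converting ``for each $d$ some $s$ works'' into ``some $s$ works for all $d$'' over a one-parameter family is exactly Yuan's Lemma --- neither of which you invoke. The same gap makes your degenerate sub-case (all $\nabla^2C_k(0)=0$) and the proposed higher-order/perturbation workarounds inconclusive: a limiting sequence of multipliers, each good for its own direction, need not converge to one good for all directions. A secondary issue is that the transformation $\tilde c=U^{\mathtt T}c$ scrambles the sign constraints on the inequality multipliers, so the ``free'' coefficients $\bar y_k$ you adjust are not obviously the coordinates of a point in $\Lambda(x^*)$; the paper sidesteps this by parametrizing $\Lambda(x^*)=(\bar\mu+\mathrm{Ker}\,J(x^*)^{\mathtt T})\cap\R^p_+$ directly in the original variables.
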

Note that \eqref{eqn:wsoc1}-\eqref{eqn:wsoc2} is the WSOC condition.
We are aware of two previous attempts of solving this conjecture.
A proof of it under an additional technical condition has appeared, 
recently, in \cite{chineses}. 
Also, a counter-example appeared in \cite{minch}. 
As we will see later in Section 3, these results are incorrect. Also, the recent paper \cite{conjnino2} proved the conjecture for a special form of quadratically-constrained problems.
Our approach is different from the ones mentioned above and 
it is based on an additional assumption of smoothness of the singular value decomposition of $J(x)$ around the basis point $x^*$.

As we have mentioned, WSOC has two important features that makes the Conjecture relevant in practical algorithms, which is our main motivation for pursuing it.
The first one is that it does not rely on the whole set of Lagrange multipliers, 
in contrast with other second-order conditions
in the literature, and 
the second one is that positive semi-definiteness of the 
Hessian of the Lagrangian must be verified in a subspace (a more tractable task) 
rather than at a pointed cone. 

This is compatible with the 
implementation of an algorithm that globally converges to a point $x^*$ fulfilling WSOC.
At each iteration, one has available an aproximation $x^k$ to a solution and a single 
Lagrange multiplier approximation $(\lambda^k, \mu^k)$ and one may check 
if WSOC is approximately satisfied at the current point $(x^k,\lambda^k,\mu^k)$ if one wishes to declare
convergence to a second-order stationary point (see details in \cite{akkt2} and references therein). Of course, this is still a non-trivial computational task, so this only makes sense when most of the effort to check WSOC was already done as part of the computation of the iterate. This is the case of algorithms that try to compute a descent direction and a negative curvature direction \cite{abms2,moguerza}. Near a KKT point, once the procedure for computing the negative curvature direction fails, WSOC is approximately satisfied.

This is an important difference with respect to other conditions that we review in the next section. 
In order to verify an optimality condition that relies on the whole set of Lagrange multipliers, 
one needs an algorithm that generates all multipliers, which may be difficult. 
Even more, in classical second-order conditions, one must check if a matrix is positive semi-definite on a pointed cone, which 
is a far more difficult problem than 
checking it on a subspace (see \cite{murtykabadi}). 
Finally, we are not aware of any 
reasonable iterative algorithm that generates subsequences that converges to a point that satisfies a 
classical, more accurate, second-order optimality condition based on a pointed cone. 
The discussion in \cite{tointexample} indicates that such algorithms probably do not exist.

In Section \ref{sec:opt2} we briefly review some related results on 
second-order optimality conditions. 
In Section 3 we prove the Conjecture under the additional assumption that the singular value decomposition of $J(x)$ is smooth in a neighborhood of $x^*$.% In Section 4 we show that our assumption is weaker than the ones previously considered for the conjecture.
In Section 4 we present some conclusions and future directions of research on this topic.

\section{Second-order optimality conditions}
\label{sec:opt2}
In this section, we review some classical and some recent results on second-order optimility conditions. 
Several second-order optimility conditions have been proposed in the literature, both 
from a theoretical and practical point of view, see  
\cite{bazaraa, rwets, nocedal, fletcher, fiaccomc, bertsekasnl, 
      bshapiro, penotsot, casast, bcshapiro, arupereira, baccaritrad, baccariwcr, gfrererd, bomze}
      and references therein.

First, we start with the basic notation. 
$\mathbb{R}^{n}$ stands for the $n$-dimensional real Euclidean space, $n \in
\mathbb{N}$. $\mathbb{R}_{+}^{n}\subset\mathbb{R}^{n}$
is the set of vector whose components are nonnegative. 
The canonical basis of $\R^n$ is denoted by $e_1,\dots,e_{n}$.
A set $\mathcal{R}\subset \mathbb{R}^{n}$ is a ray if 
$\mathcal{R}:=\{rd_{0}: r \geq 0\}$ for some $d_{0}\in \mathbb{R}^{n}$. %\setminus\{0\}$. % is a nonzero vector.
Given a convex cone $\mathcal{K} \subset \mathbb{R}^{n}$, 
we define the lineality set of $\mathcal{K}$ as $\mathcal{K}\cap-\mathcal{K}$, which is the largest subspace contained in $\mathcal{K}$. 
We say that $\mathcal{K}$ is a first-order cone if $\mathcal{K}$ is the direct sum of a subspace and a ray.

We denote the Lagrangian function by 
$L(x,\lambda,\mu)=f(x)+\sum_{i=1}^m\lambda_i h_i(x)+\sum_{j=1}^p\mu_j g_j(x)$
where $(x, \lambda, \mu)$ is in $\mathbb{R}^{n}\times \mathbb{R}^{m}\times \mathbb{R}_{+}^{p}$
and the generalized Lagrangian function as
$L^{g}(x,\lambda_0,\lambda,\mu)=\lambda_0f(x)+\sum_{i=1}^m\lambda_i h_i(x)+\sum_{j=1}^p\mu_j g_j(x)$ 
where $(x, \lambda_{0}, \lambda, \mu) \in \mathbb{R}^{n}\times \mathbb{R}_{+}\times\mathbb{R}^{m}\times \mathbb{R}_{+}^{p}$.
Clearly, $L^{g}(x,1,\lambda,\mu)=L(x,\lambda,\mu)$.
The symbols $\nabla_{x} L^{g}(x,\lambda_0,\lambda,\mu)$ 
and $\nabla^{2}_{xx} L^{g}(x,\lambda_0,\lambda,\mu)$ stand for 
the gradient and the Hessian of 
$L^{g}(x,\lambda_0,\lambda,\mu)$ with respect to $x$, respectively. 
Similar notation holds for $L(x,\lambda,\mu)$.

The generalized first-order optimality condition at the feasible point $x^{*}$ is
\begin{equation}
\label{eqn:kkt}
 \nabla L^{g}_{x}(x^*,\lambda_0,\lambda,\mu)=0 \ \ \text{ with } \ \ \mu^{\mathtt{T}}g(x^{*})=0, \ \ \lambda_0 \geq0,\ \  \mu\geq0,\ \  
 (\lambda_{0},\lambda, \mu)\neq(0,0,0).
\end{equation}
The set of vectors $(\lambda_0,\lambda,\mu) \in \mathbb{R}_{+}\times\mathbb{R}^{m}\times \mathbb{R}_{+}^{p}$ 
satisfying \eqref{eqn:kkt}
is the set of generalized Lagrange multipliers (or Fritz John multipliers), denoted by $\Lambda_0(x^*)$.
Note that \eqref{eqn:kkt} with $\lambda_0=1$ corresponds to the Karush-Kuhn-Tucker (KKT) conditions, 
the standard first-order condition in numerical optimization. We denote by 
$\Lambda(x^*):=\{(\lambda,\mu) \in \mathbb{R}^{m}\times \mathbb{R}_{+}^{p}: (1,\lambda,\mu) \in \Lambda_0(x^*)\}$, the set of all Lagrange multipliers.
At every minimizer, there are Fritz John multipliers such that \eqref{eqn:kkt} holds, that is, $\Lambda_0(x^*)\neq\emptyset$. In order to get existence of true Lagrange multipliers, additional assumptions have to be required. Assumptions on the analytic description of the feasible set that guarantee the validity of the KKT conditions at local minimizers are called constraint qualification (CQ).
Thus, under any CQ, the KKT conditions are necessary for optimality.

When the second-order information is avaliable, we can consider second-order conditions.
In order to describe second-order conditions (in a dual form), 
we introduce some important sets. We start with the 
cone of critical directions (critical cone), defined as follows:
\begin{equation}
\label{eqn:cone}
 C(x^*):=\{d\in\R^n\mid \nabla f(x^*)^\mathtt{T}d=0; \nabla h_i(x^*)^\mathtt{T}d=0, i\in\mathcal{E}; 
\nabla g_j(x^*)^\mathtt{T}d\leq0, j \in A(x^*)\}.
\end{equation}
Obviouly, $C(x^{*})$ is a non-empty closed convex cone. 
When $\Lambda(x^{*})\neq \emptyset$, the critical cone $C(x^*)$ can be written as
  \begin{equation}\label{eqn:scone}
  \left \{d \in \mathbb{R}^{n} :
            \begin{array}{lll}
            & \nabla h_{i}(x^{*})^{\mathtt{T}}d =0,\text{ for } i \in\mathcal{E},
              \nabla g_{j}(x^{*})^{\mathtt{T}}d =0,\text{ if } \mu_{j}>0\\
            & \nabla g_{j}(x^{*})^{\mathtt{T}}d \leq 0, \text{ if } \mu_{j}=0, j \in A(x^{*})  
            \end{array}
            \right \},
  \end{equation}
for every $(\lambda, \mu) \in \Lambda(x^*)$.   
From the algorithmic point of view, an important set is the critical subspace (or weak critical cone), given by:
\begin{equation}
\label{eq:wcone}
 S(x^*):=\{d\in\R^n\mid \nabla h_i(x^*)^\mathtt{T}d=0, i\in\mathcal{E}; \nabla g_j(x^*)^\mathtt{T}d=0, j \in A(x^*)\}.
\end{equation}
In the case when $\Lambda(x^{*})\neq \emptyset$, a simple inspection shows that the critical subspace $S(x^{*})$ is the lineality space of the critical cone $C(x^*)$.  Under strict complementarity, $S(x^*)$ coincides with $C(x^*)$.

Now, we are able to define the 
classical second-order conditions. 

\begin{definition}
 \label{def:wsoc}
 Let $x^{*}$ be a feasible point with $\Lambda(x^{*})\neq\emptyset$. We have the following definitions
 \begin{enumerate}
  \item 
  We say that the {\it strong second-order optimality condition} (SSOC) holds at $x^{*}$ if
  there is a $(\lambda,\mu)\in\Lambda(x^*)$ such that 
  $d^\mathtt{T}\nabla^2_{xx} L(x^*,\lambda,\mu)d\geq0$ for every $d \in C(x^{*})$.
  \item 
  We say that the {\it weak second-order optimality condition} (WSOC) holds at $x^{*}$ if
  there is  a $(\lambda,\mu)\in\Lambda(x^*)$ such that 
  $d^\mathtt{T}\nabla^2_{xx} L(x^*,\lambda,\mu)d\geq0$ for every $d \in S(x^{*})$.
 \end{enumerate} 
\end{definition}

The classical second-order condition SSOC is particularly important from the point of view of passing from 
necessary to sufficient optimality conditions. In this case, strenghtening the sign of the inequality from ``$\geq0$'' to ``$>0$'' in the definition of SSOC, that is, 
instead of positive semi-definiteness of the Hessian of the Lagrangian on the critical cone, 
we require its positive definiteness 
on the same cone (minus the origin), we get a sufficient optimality condition, see \cite{bazaraa, bshapiro}.
Furthermore, this sufficient condition also ensures that the local minimizer $x^*$ is isolated.
Besides these nice properties, from the practical point of view, SSOC has some disvantages. 
In fact, to verify the validity of SSOC at a given point, is in general, an NP-hard problem, 
\cite{murtykabadi, pardalos}. Also, it is well known that very simple second-order methods fail to generate sequences in which SSOC holds at its accumulation points, see \cite{tointexample}. %In fact, no reasonable algorithm with such property is currently known by the authors.
%Therefore, efficient methods can only be guaranteed to converge to points 
%satisfying suitable second-order condition  
%which verification problem is tractable. 

From this point of view, WSOC seems to be the 
most adequate second-order condition 
when dealing with global convergence of second-order methods. In fact, all second-order algorithms known by the authors only guarantee
convergence to points satisfying WSOC, see 
\cite{abms2, bss, cly, conntoint2, dennisv, dennisalem, dipillo, facchinei, grSQP, moguerza} and references therein.

This situation in which a most desirable theoretical property 
is not suitable in an algorithmic framework is not particular only to the second-order case. 
Even in the first-order case, it is known, for example, 
that the Guignard constraint qualification is the 
weakest possible assumption to yield KKT conditions at a local minimizer \cite{gould}. 
In other words, a good first-order necessary optimality condition is of the form ``KKT or not-Guignard''. 
But this is too strong for practical purposes, since no algorithm is 
known to fulfill such condition at limit points of sequences generated by it,
in fact, the convergence assumptions of algorithms require stronger constraint qualifications \cite{rcpld,cpg,amrs2,amrs}.
%even in the convex setting. 
For second-order algorithms, the situation is quite similar, 
with the peculiarity that the difficulty is not only on the required constraint qualification, 
but also in the verification of the  
optimality condition, since, 
numerically, we can only guarantee a partial second-order property, that is, for directions in the critical subspace, 
which is a subset of the desirable critical cone of directions. 

As the KKT conditions, SSOC and WSOC hold at minimizers only if some additional condition is valid. 
As we will explore in the next section, 
only MFCQ is not enough to ensure the existence of some Lagrange multiplier where SSOC holds. 
Even WSOC can not be assured to hold under MFCQ alone.
Under MFCQ, we have the following result, \cite{bshapiro, bental}:
\begin{theorem} 
\label{teo-bs}
 Let $x^{*}$ be a local minimizer of \eqref{problem}. Assume that MFCQ holds at $x^{*}$. Then, 
 \begin{equation}
  \label{bonnans-shapiro1}
 \mbox{for each } d\in C(x^*),\mbox{ there is a multiplier }(\lambda,\mu)\in\Lambda(x^*)\mbox{ such that }
 %, \exists (\lambda,\mu)\in\Lambda(x^*), 
  d^\mathtt{T}\nabla^2_{xx} L(x^*,\lambda,\mu)d\geq0.
 \end{equation}
\end{theorem}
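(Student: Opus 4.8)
\emph{Proposal.} The plan is to fix a critical direction $d\in C(x^*)$ and to obtain the required multiplier as a dual solution of a suitable linearized subproblem in an auxiliary variable $w\in\R^n$. Write $A:=A(x^*)$ and split it as $A=A_0\cup A_-$, where $A_0:=\{j\in A:\nabla g_j(x^*)^\mathtt{T}d=0\}$ and $A_-:=\{j\in A:\nabla g_j(x^*)^\mathtt{T}d<0\}$, and consider the linear program
\begin{equation*}
\begin{array}{ll}
\text{minimize}   & \nabla f(x^*)^\mathtt{T}w+d^\mathtt{T}\nabla^2 f(x^*)d,\\
\text{subject to} & \nabla h_i(x^*)^\mathtt{T}w+d^\mathtt{T}\nabla^2 h_i(x^*)d=0,\quad i\in\mathcal{E},\\
                  & \nabla g_j(x^*)^\mathtt{T}w+d^\mathtt{T}\nabla^2 g_j(x^*)d\le 0,\quad j\in A_0.
\end{array}
\end{equation*}
I will argue that this program is feasible with finite optimal value $v\ge 0$, and that any dual optimal solution, completed with zeros on $A_-$ and on the inactive indices, is a Lagrange multiplier $(\lambda,\mu)\in\Lambda(x^*)$ with $d^\mathtt{T}\nabla^2_{xx}L(x^*,\lambda,\mu)d=v\ge 0$, which is the assertion (in particular the KKT conditions come out as a byproduct, so $\Lambda(x^*)$ need not be invoked beforehand).

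Feasibility is immediate: the linear independence of $\{\nabla h_i(x^*):i\in\mathcal{E}\}$ guaranteed by MFCQ makes the equality block solvable in $w$, and adding a sufficiently large multiple of the MFCQ direction $d_0$ (which satisfies $\nabla h_i(x^*)^\mathtt{T}d_0=0$ and $\nabla g_j(x^*)^\mathtt{T}d_0<0$ for $j\in A$) turns the $A_0$-inequalities strict without disturbing the equalities. For the bound $v\ge 0$, I would take an arbitrary feasible $w$, replace it by $w+\varepsilon d_0$ so that all $A_0$-constraints become strict, and use the implicit function theorem on the equality block to correct the curve $t\mapsto x^*+td+\tfrac{t^2}{2}(w+\varepsilon d_0)$ into a twice differentiable arc $x(t)$ with $h_i(x(t))\equiv 0$; a second-order expansion, together with the defining equalities imposed on $w$, forces the correction term to be $o(t^2)$, so $x(t)=x^*+td+\tfrac{t^2}{2}(w+\varepsilon d_0)+o(t^2)$. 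For small $t>0$ one has $g_j(x(t))<0$ both for $j\in A_-$ (from the first-order term) and for $j\in A_0$ (from the strict second-order term), so $x(t)\in\Omega$; local minimality and $\nabla f(x^*)^\mathtt{T}d=0$ (recall \eqref{eqn:cone}) then give $0\le f(x(t))-f(x^*)=\tfrac{t^2}{2}\bigl(\nabla f(x^*)^\mathtt{T}(w+\varepsilon d_0)+d^\mathtt{T}\nabla^2 f(x^*)d\bigr)+o(t^2)$, hence the objective is nonnegative at $w+\varepsilon d_0$, and letting $\varepsilon\downarrow 0$ shows it is nonnegative at $w$. Since $w$ was arbitrary, $v\ge 0$.

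Since it is a feasible linear program that is bounded below, the subproblem has no duality gap and its dual optimum is attained. The dual feasibility relation is precisely the stationarity equation $\nabla f(x^*)+\sum_{i}\lambda_i\nabla h_i(x^*)+\sum_{j\in A_0}\mu_j\nabla g_j(x^*)=0$ with $\mu_j\ge 0$, and, once $\mu$ is extended by zero to $A_-$ and to the inactive constraints, complementarity holds automatically, so $(\lambda,\mu)\in\Lambda(x^*)$; moreover the dual objective value equals $d^\mathtt{T}\nabla^2_{xx}L(x^*,\lambda,\mu)d$, which by strong duality is $v\ge 0$.

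I expect the delicate point to be the construction of the feasible arc — in particular, checking that the implicit-function correction enforcing $h_i\equiv 0$ is $o(t^2)$ rather than merely $o(t)$ (this is exactly where the particular equality constraints imposed on $w$ are used), together with the routine but necessary passage from the strictly feasible perturbations $w+\varepsilon d_0$ back to the closed feasible set of the linear program. A shortcut would be to invoke metric regularity of the constraint system, which is equivalent to MFCQ, to produce the exact feasible arc directly, at the cost of replacing the explicit computation by an appeal to a stability theorem.
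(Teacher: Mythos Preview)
Your argument is correct and is essentially the classical LP--duality proof of this result (as in Ben--Tal and in Bonnans--Shapiro, the references the paper cites). The paper, however, does not give its own proof of Theorem~\ref{teo-bs}: it states the result with a citation and later remarks that it follows at once from the Fritz--John second-order condition (Theorem~\ref{teo-aru}), because MFCQ excludes generalized multipliers with $\lambda_0=0$. So your route is genuinely different from what the paper does: you build the multiplier directly as a dual optimal solution of the second-order linearized subproblem, which has the advantage of being self-contained and of yielding the KKT multiplier and the second-order inequality in one stroke, while the paper's route is a one-line reduction that offloads the work to Theorem~\ref{teo-aru}. The step you flag as delicate---that the implicit-function correction enforcing $h\equiv 0$ is $o(t^2)$---is fine: the second-order choice of $w$ makes $h(\bar x(t))=o(t^2)$, and the full-rank equality block (from MFCQ) lets you solve $h(\bar x(t)+Ny)=0$ for a $C^2$ function $y(t)$ with $y(0)=0$, $y'(0)=0$, $y''(0)=0$, so the correction $Ny(t)$ is indeed $o(t^2)$; the metric-regularity shortcut you mention would also do the job.
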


Note that for each critical direction, we have an associated Lagrange multiplier 
$(\lambda,\mu)\in\Lambda(x^*)$, in opposition to 
SSOC or WSOC, where we require {\it the same} Lagrange multiplier for all critical directions. 
Observe that \eqref{bonnans-shapiro1} does not imply WSOC (and neither SSOC).

Observe also that since $\Lambda(x^{*})$ is a compact set (by MFCQ), 
\eqref{bonnans-shapiro1} can be written in a more compact form, namely, 
$$
\forall d\in C(x^*), \ \ \text{sup} \{d^\mathtt{T}\nabla^2_{xx} L(x^*,\lambda,\mu)d : (\lambda,\mu)\in\Lambda(x^*))\}\geq0.
$$ Although this optimality condition relies on the whole Lagrange multiplier set $\Lambda(x^*)$, 
hence it is not suitable for our practical considerations, it will play a crucial role in our analysis.

Even when no constraint qualification is assumed,
a second-order optimality condition can be formulated, relying on Fritz John multipliers \eqref{eqn:kkt}:

\begin{theorem}
\label{teo-aru}
Let $x^*$ be a local minimizer of \eqref{problem}. Then, for every $d$ in the critical cone $C(x^*)$, there is a Fritz John 
multiplier $(\lambda_0,\lambda,\mu)\in\Lambda_0(x^*)$ such that 
 \begin{equation}
 \label{arutyunov}
 d^\mathtt{T}\nabla^2_{xx} L^{g}(x^*,\lambda_0,\lambda,\mu)d\geq0.
 \end{equation}
\end{theorem}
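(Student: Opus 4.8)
The plan is to argue by contraposition at the level of a \emph{second-order linearized system}: using only the local minimality of $x^*$, I would show that a certain system of linear (in)equalities in an auxiliary vector $w\in\R^n$ --- the second-order linearization of ``feasible descent along the parabola $t\mapsto x^*+td+\tfrac{t^2}{2}w$'' --- has no solution, and then extract the desired Fritz John multiplier from a dual certificate of this infeasibility furnished by Motzkin's transposition theorem; the structure of the dual equation will force the curvature $d^\mathtt{T}\nabla^2_{xx}L^{g}(x^*,\cdot)d$ to equal a nonnegative slack scalar.

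To set up, fix $d\in C(x^*)$. The case $d=0$ is immediate, since $\Lambda_0(x^*)\neq\emptyset$ and \eqref{arutyunov} then reads $0\geq0$; so assume $d\neq0$. I would also first dispose of the case in which $\{\nabla h_i(x^*):i\in\mathcal{E}\}$ is linearly dependent: taking $\alpha\neq0$ with $\sum_i\alpha_i\nabla h_i(x^*)=0$ and a sign $\varepsilon\in\{-1,+1\}$ with $\varepsilon\sum_i\alpha_i\,d^\mathtt{T}\nabla^2 h_i(x^*)d\geq0$, the triple $(0,\varepsilon\alpha,0)$ lies in $\Lambda_0(x^*)$ and already satisfies \eqref{arutyunov}. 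So assume moreover that $\{\nabla h_i(x^*)\}_{i\in\mathcal{E}}$ is linearly independent, and set $A^{=}:=\{j\in A(x^*):\nabla g_j(x^*)^\mathtt{T}d=0\}$. The system to be shown infeasible is, in the unknown $w\in\R^n$,
\begin{align*}
 \nabla h_i(x^*)^\mathtt{T}w &= -\,d^\mathtt{T}\nabla^2 h_i(x^*)\,d, \quad i\in\mathcal{E},\\
 \nabla g_j(x^*)^\mathtt{T}w &< -\,d^\mathtt{T}\nabla^2 g_j(x^*)\,d, \quad j\in A^{=},\\
 \nabla f(x^*)^\mathtt{T}w &< -\,d^\mathtt{T}\nabla^2 f(x^*)\,d.
\end{align*}

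Suppose, for contradiction, that $w$ solves this system. Along $x(t):=x^*+td+\tfrac{t^2}{2}w$, a Taylor expansion (using $d\in C(x^*)$, so that $\nabla h_i(x^*)^\mathtt{T}d=0$ and the equality rows of the system cancel the quadratic terms) gives $h_i(x(t))=o(t^2)$ for all $i$; since $\{\nabla h_i(x^*)\}$ is onto $\R^m$, a quantitative surjectivity (Lyusternik/Graves) argument produces a correction $z(t)$ of order $o(t^2)$ with $h_i\bigl(x(t)+z(t)\bigr)=0$ for all $i$ and all small $t>0$. Expanding $g_j$ and $f$ at $x(t)+z(t)$ and using the two strict inequalities of the system together with $\nabla g_j(x^*)^\mathtt{T}d\leq0$ and $\nabla f(x^*)^\mathtt{T}d\leq0$, one obtains, for all sufficiently small $t>0$, $g_j(x(t)+z(t))<0$ for every $j\in A(x^*)$ (the inactive constraints remaining negative near $x^*$) and $f(x(t)+z(t))<f(x^*)$; this feasible point beats $x^*$, contradicting local minimality. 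I expect this to be the main obstacle: the real work is precisely this passage from solvability of the \emph{linearized} system to a genuine feasible arc, which needs the quantitative surjectivity of $\{\nabla h_i(x^*)\}$ (this is why the reduction to linear independence is made) and careful control of the $o(t^2)$ remainders so that the strict inequalities survive along the arc.

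Having established infeasibility, I would apply Motzkin's transposition theorem: there exist $\lambda_0\geq0$, $\mu_j\geq0$ ($j\in A^{=}$), $\lambda\in\R^m$ and a scalar $y\geq0$, not all zero, with
\[
 \lambda_0\nabla f(x^*)+\sum_{i\in\mathcal{E}}\lambda_i\nabla h_i(x^*)+\sum_{j\in A^{=}}\mu_j\nabla g_j(x^*)=0
\]
and
\[
 \lambda_0\,d^\mathtt{T}\nabla^2 f(x^*)d+\sum_{i\in\mathcal{E}}\lambda_i\,d^\mathtt{T}\nabla^2 h_i(x^*)d+\sum_{j\in A^{=}}\mu_j\,d^\mathtt{T}\nabla^2 g_j(x^*)d=y .
\]
Extending $\mu$ by zero on $\mathcal{I}\setminus A^{=}$, the first identity is the stationarity in \eqref{eqn:kkt} and $\mu_jg_j(x^*)=0$ for all $j$, while $(\lambda_0,\lambda,\mu)\neq0$ because otherwise the second identity would force $y=0$, annihilating the whole certificate. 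Hence $(\lambda_0,\lambda,\mu)\in\Lambda_0(x^*)$, and since the appended $\mu_j$ vanish, the second identity reads $d^\mathtt{T}\nabla^2_{xx}L^{g}(x^*,\lambda_0,\lambda,\mu)d=y\geq0$, which is \eqref{arutyunov}. (Multiplying the stationarity identity by $d$ gives $\lambda_0\nabla f(x^*)^\mathtt{T}d=0$, so in the degenerate configurations --- $\{\nabla h_i(x^*)\}$ dependent, or $\nabla f(x^*)^\mathtt{T}d<0$ --- one necessarily lands on an abnormal multiplier with $\lambda_0=0$.)
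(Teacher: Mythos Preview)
The paper does not supply its own proof of this theorem; it is quoted as a classical result with references to the literature (Dubovitskii--Milyutin, Ben-Tal, Levitin, Bonnans--Shapiro, Arutyunov) and used only as background. So there is no in-paper argument to compare against.

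Your approach is the standard classical one and is sound: fix a critical direction $d$, show that the second-order linearized system in $w$ (encoding a parabolic feasible descent arc) is infeasible at a local minimizer, and then dualize via a theorem of alternatives to extract a Fritz John multiplier together with the curvature inequality. The preliminary reduction to linearly independent $\{\nabla h_i(x^*)\}$ via the sign trick on $\alpha$ is correct, and this reduction is precisely what makes the Lyusternik/Graves correction $z(t)=o(t^2)$ available for the equality constraints; the strict inequalities then survive the correction because the $o(t^2)$ term is dominated. Two minor remarks: first, in this paper the critical cone is defined with the equality $\nabla f(x^*)^\mathtt{T}d=0$, so your closing parenthetical about the case $\nabla f(x^*)^\mathtt{T}d<0$ is vacuous here; second, the nontriviality that Motzkin delivers is on the multipliers attached to the \emph{strict} rows (that is, on $(\lambda_0,\mu,y)$), and your deduction that $(\lambda_0,\lambda,\mu)\neq0$ then follows because, under the assumed linear independence of the $\nabla h_i(x^*)$, the stationarity identity forbids $\lambda$ from being the only nonzero piece.
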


The optimality condition of Theorem \ref{teo-aru} %can be thought as the most accurate second-order necessary optimality condition and it 
has been studied a lot over the years, \cite{dubo, bental, levitin, bshapiro, aru}. 
An important property is that it can be transformed into a sufficient optimality condition 
by simply replacing the non-negative sign ``$\geq0$'' by ``$>0$'' (except at the origin), without any additional assumption. 
For this reason, this condition is said to be a ``no-gap'' optimality condition. 
Note that this is different from the case of SSOC, 
since an additional assumption must be made for the necessary condition to hold. 
Note that Theorem \ref{teo-bs} can be derived from Theorem \ref{teo-aru}, since under MFCQ, 
there is no Fritz John multiplier with $\lambda_0=0$.

We emphasize that even though optimality conditions 
given by Theorems \ref{teo-bs} and \ref{teo-aru} have nice theoretical properties, they do not suit our framework since 
their verification requires the knowledge of the whole set of (generalized) Lagrange multipliers 
at the basis point, whereas in practice, 
we only have access to (an approximation of) a single Lagrange multiplier. 
In the case of the optimality condition given by Theorem \ref{teo-aru}, one could argue that the possibility of verifying it with $\lambda_0=0$, and hence independently of the objective function, is not useful at all as an optimality condition. This is arguably the case for the first-order Fritz John optimality condition, but since Theorem \ref{teo-aru} gives a ``no-gap'' optimality condition, this argument is not convincent in the second-order case. In fact, one could show that if the sufficient optimality condition associated to Theorem \ref{teo-aru} is fulfilled with $\lambda_0=0$ for all critical directions, then the basis point is an isolated feasible point, and hence a local solution independently of the objective function. We take the point of view that algorithms naturally treat differently the objective function and the constraint functions, in a way that a multiplier associated to the objective function is not present, hence our focus on Lagrange multipliers, rather than on Fritz John multipliers.

As we have mentioned, known practical methods are only guaranteed to converge to points satisfying WSOC, and hence, we focus our attention, from now on, on conditions ensuring it at local minimizers.

We start with \cite{baccaritrad}, where the authors investigate the issue of verifying (\ref{bonnans-shapiro1}) 
for the same Lagrange multiplier: % $(\lambda,\mu)$:

\begin{theorem}[\cite{baccaritrad}]
\label{bttheorem}
Let $x^*$ be a local minimizer of (\ref{problem}). Assume that MFCQ holds at $x^*$ 
and that $\Lambda(x^*)$ is a line segment.
Then, for every first-order cone $K\subset C(x^*)$, 
there is a $(\lambda^K,\mu^K)\in\Lambda(x^*)$ such that 
\begin{equation}
\label{bt}
\forall d\in K, \ \  d^\mathtt{T}\nabla^2_{xx} L(x^*,\lambda^K,\mu^K)d\geq0.
\end{equation}
\end{theorem}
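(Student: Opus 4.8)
The plan is to derive Theorem~\ref{bttheorem} from the ``compact'' form of Theorem~\ref{teo-bs} together with a one–dimensional selection argument. First I would use the hypothesis that $\Lambda(x^*)$ is a line segment to parametrize it as $(\lambda(t),\mu(t)) = (1-t)(\lambda^0,\mu^0) + t(\lambda^1,\mu^1)$, $t\in[0,1]$, and set $Q(t) := \nabla^2_{xx}L(x^*,\lambda(t),\mu(t))$, which is an \emph{affine} function of $t$ (it is linear in the multipliers up to the constant term $\nabla^2 f(x^*)$). Since MFCQ makes $\Lambda(x^*)$ compact, the remark following Theorem~\ref{teo-bs} gives $\sup_{t\in[0,1]} d^{\mathtt T}Q(t)d \geq 0$ for every $d\in C(x^*)$, and by affineness this supremum equals $\max(d^{\mathtt T}Q(0)d,\, d^{\mathtt T}Q(1)d)$. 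Hence, for every $d\in K$, the set $T_d := \{t\in[0,1] : d^{\mathtt T}Q(t)d\geq 0\}$ is a nonempty closed subinterval of $[0,1]$, and the theorem reduces to showing $\bigcap_{d\in K}T_d\neq\emptyset$, since any $t^*$ in this intersection yields the common multiplier $(\lambda^K,\mu^K) := (\lambda(t^*),\mu(t^*))$.

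To produce such a $t^*$ I would invoke the elementary Helly property of intervals on the real line: a family of closed subintervals of $[0,1]$ has a common point as soon as every two of them meet (indeed $\bigcap_d T_d = [\sup_d \ell_d,\ \inf_d r_d]$, where $\ell_d,r_d$ are the endpoints of $T_d$, and emptiness of the right-hand side forces two members to be disjoint). So it suffices to prove: for any $u,v\in K$ one has $T_u\cap T_v\neq\emptyset$, i.e.\ there is a single $t$ with $u^{\mathtt T}Q(t)u\geq 0$ and $v^{\mathtt T}Q(t)v\geq 0$. Suppose not; then, relabelling $Q(0)\leftrightarrow Q(1)$ and $u\leftrightarrow v$ if necessary, the intervals $T_u$ and $T_v$ are disjoint and ordered, with $u^{\mathtt T}Q(0)u<0\leq u^{\mathtt T}Q(1)u$ and $v^{\mathtt T}Q(1)v<0\leq v^{\mathtt T}Q(0)v$. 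The goal is then to exhibit a single direction $w\in K\subset C(x^*)$ with $w^{\mathtt T}Q(0)w<0$ and $w^{\mathtt T}Q(1)w<0$, which contradicts $\sup_t w^{\mathtt T}Q(t)w\geq 0$.

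Here the first-order cone structure enters: write $K = V\oplus\mathbb{R}_+ d_0$ with $V$ its lineality subspace and $d_0\notin V$, and note that $K$ is convex, that $K+V=K$, and that $K$ is the intersection of the subspace $W:=V\oplus\mathbb{R}d_0$ with a half-space of $W$ bounded by $V$. The candidate bad direction $w$ is to be sought among the points $(1-s)u+sv$, $s\in[0,1]$ (all in $K$ by convexity), perturbed by elements of $V$ and pushed ``to infinity'' along the extreme ray $d_0$; rescaling a point $v'+r d_0\in K$ with $r>0$ by $r$ turns the inequalities $w^{\mathtt T}Q(i)w<0$ into the question of whether the inhomogeneous quadratics $v'\mapsto (v'+d_0)^{\mathtt T}Q(i)(v'+d_0)$ fail to be nonnegative on the subspace $V$, which can be decided through positive semidefiniteness of the associated bordered matrices. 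The sign pattern of $u,v$ above, together with the fact that no $w_s:=(1-s)u+sv$ can have \emph{both} $w_s^{\mathtt T}Q(0)w_s<0$ and $w_s^{\mathtt T}Q(1)w_s<0$ (again by Theorem~\ref{teo-bs}, since $w_s\in K\subset C(x^*)$), pins down how the negativity sets of $Q(0)$ and $Q(1)$ along the segment and along the ray must be arranged, and a finite case analysis on these quadratics forces the contradiction.

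I expect the pairwise lemma of the last paragraph to be the genuine obstacle: the reduction from Theorem~\ref{teo-bs} and the Helly step are routine, but tracking the several quadratics (in the segment parameter $s$, and in the ray/translation parameters), and ruling out every configuration of their sign sets while using only convexity of $K$, the invariance $K+V=K$, and unboundedness along $d_0$, is delicate; it is precisely this bookkeeping that makes the hypothesis ``$K$ is a first-order cone'' (rather than an arbitrary convex subcone of $C(x^*)$, for which the analogous statement fails in general) indispensable.
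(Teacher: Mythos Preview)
Your reduction is exactly what Baccari and Trad do, and what the paper indicates: parametrize the segment $\Lambda(x^*)$ by $t\in[0,1]$, observe that $d^{\mathtt T}Q(t)d$ is affine in $t$, and use Theorem~\ref{teo-bs} (in its sup form) to obtain $\max\{d^{\mathtt T}Q(0)d,\, d^{\mathtt T}Q(1)d\}\geq 0$ for every $d\in K\subset C(x^*)$. At this point the paper simply invokes Yuan's Lemma (Lemma~\ref{yuan}), stated precisely for this situation: it immediately yields $\alpha,\beta\geq 0$ with $\alpha+\beta=1$ and $\alpha Q(0)+\beta Q(1)=Q(\beta)$ positive semidefinite on the first-order cone $K$, so $(\lambda^K,\mu^K):=(\lambda(\beta),\mu(\beta))$ is the desired multiplier.

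Your Helly-on-intervals step is a legitimate reformulation, and the reduction to a pairwise claim is correct. But that pairwise claim --- under the standing hypothesis $\max\{d^{\mathtt T}Pd,d^{\mathtt T}Qd\}\geq 0$ on $K$, any two intervals $T_u,T_v$ meet --- is, via Helly, logically \emph{equivalent} to Yuan's Lemma on $K$. You are therefore not bypassing Lemma~\ref{yuan} but proposing to re-prove it inline, and your last paragraph is only a sketch: ``pins down how the negativity sets \dots\ must be arranged'' and ``a finite case analysis forces the contradiction'' do not constitute a proof. Yuan's Lemma is known to be delicate (the analogous three-matrix statement already fails, so no soft compactness argument can work); the naive search along $w_s=(1-s)u+sv$ does not suffice in general, and the extra degrees of freedom you introduce (translation by $V$, scaling along $d_0$) require the specific two-dimensional argument of \cite{yuan,baccaritrad} that your outline does not reproduce. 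The cleanest fix is to stop after obtaining $\max\{d^{\mathtt T}Q(0)d,d^{\mathtt T}Q(1)d\}\geq 0$ on $K$ and cite Lemma~\ref{yuan} directly, as the paper does.
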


We are interested only in the special case $K:=S(x^*)$. Thus, 
(\ref{bt}) holds at a local minimizer $x^*$ 
when $\Lambda(x^*)$ is a line segment and MFCQ holds at $x^{*}$ (or, equivalently, $\Lambda(x^*)$ is a bounded line segment). 
Note that in this case, \eqref{bt} is equivalent to WSOC. 

In order to prove Theorem \ref{bttheorem} a crucial result is Yuan's Lemma \cite{yuan}, 
which was generalized for first-order cones in \cite{baccaritrad}. For further applications of 
Yuan's Lemma, see \cite{martinezyuan, crouzeixyuan}.

\begin{lemma}[Yuan \cite{yuan,baccaritrad}] 
\label{yuan}
Let $P,Q\in\R^{n\times n}$ be two symmetric matrices and $K\subset\R^n$ a first-order cone. 
Then the following conditions are equivalent:
\begin{itemize}
\item $\max\{d^\mathtt{T}Pd,d^\mathtt{T}Qd\}\geq0, \ \ \forall d\in K$;
\item There exist $\alpha\geq0$ and $\beta\geq0$ with $\alpha+\beta=1$ such that $d^\mathtt{T}(\alpha P+\beta Q)d\geq0$, $\forall d\in K$.
\end{itemize}
\end{lemma}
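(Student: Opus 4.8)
The plan is to prove the non-trivial implication, from the $\max$-condition to the existence of the convex combination; the reverse implication is immediate since $\alpha P+\beta Q$ is a convex combination of $P$ and $Q$ evaluated at the same $d$, so $d^\mathtt{T}(\alpha P+\beta Q)d\geq 0$ forces $\max\{d^\mathtt{T}Pd,d^\mathtt{T}Qd\}\geq 0$. Write $K=W\oplus\mathcal{R}$, where $W$ is a subspace and $\mathcal{R}=\{r d_0:r\geq 0\}$ a ray, as guaranteed by the definition of a first-order cone. The idea is to first establish the statement on the subspace $W$ alone, using the classical version of Yuan's Lemma for subspaces (equivalently, the simultaneous-diagonalization / $S$-lemma-type result for two quadratic forms on a linear subspace), obtaining $\alpha_0,\beta_0\geq 0$ with $\alpha_0+\beta_0=1$ and $w^\mathtt{T}(\alpha_0 P+\beta_0 Q)w\geq 0$ for all $w\in W$. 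Then I would argue that one can perturb this pair to handle the extra ray direction $d_0$.

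The key technical device is a separation/convexity argument on the set of pairs of values. Consider the map $d\mapsto (d^\mathtt{T}Pd,\,d^\mathtt{T}Qd)\in\R^2$ restricted to $K$, and let $\mathcal{C}$ be the convex hull of its image together with the nonnegative orthant shifted appropriately; the $\max$-condition says the image avoids the open third quadrant $\{(a,b):a<0,b<0\}$. Because $K$ is a first-order cone — not a general cone but a subspace plus a single ray — the relevant image set has enough convexity structure (the quadratic image of a subspace is convex by Dines' theorem, and adding one ray direction keeps the obstruction tractable) that a hyperplane through the origin separates it from the open third quadrant. The normal $(\alpha,\beta)$ of that hyperplane has $\alpha,\beta\geq 0$, not both zero; after normalizing $\alpha+\beta=1$ we get $d^\mathtt{T}(\alpha P+\beta Q)d\geq 0$ for all $d\in K$. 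I expect the cleanest route is: apply the subspace case to $W$, then split into the case where the resulting form is already positive semidefinite in the $d_0$ direction (done) versus the case where $d_0^\mathtt{T}(\alpha_0 P+\beta_0 Q)d_0<0$, in which case the $\max$-condition at $d_0$ itself pins down on which side to tilt $(\alpha,\beta)$, and a one-parameter continuity/compactness argument along the segment of admissible pairs produces the needed combination.

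The main obstacle is precisely the interaction between the ray and the subspace: on a subspace the result is classical, and on a single ray it is trivial, but for $w+r d_0$ the cross term $2r\,w^\mathtt{T}(\alpha P+\beta Q)d_0$ can have either sign, so one cannot simply take the pair that works on $W$. The heart of the argument is showing that the set of pairs $(\alpha,\beta)$ in the probability simplex for which the form is PSD on $W$ is a closed subinterval, that the analogous set for the ray is the whole simplex or a half-line condition, and that the $\max$-hypothesis guarantees these two sets intersect — this is where the first-order-cone structure (as opposed to a general polyhedral or arbitrary cone) is essential, and where Dines-type convexity of the joint numerical range must be invoked carefully.
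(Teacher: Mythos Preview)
The paper does not prove this lemma; it is quoted from Yuan (subspace case) and Baccari--Trad (extension to first-order cones), so there is no in-paper argument to compare against. Your proposal must therefore be judged on its own merits.

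The overall strategy --- Dines-type convexity of the joint quadratic image, then separation from the open third quadrant --- is the right circle of ideas, and you correctly isolate the cross term $2r\,w^{\mathtt T}(\alpha P+\beta Q)d_0$ as the real obstacle. However, your closing resolution does not actually overcome that obstacle. You propose to intersect the set of $(\alpha,\beta)$ for which $\alpha P+\beta Q$ is positive semidefinite on $W$ with the set for which it is nonnegative at $d_0$, and to argue the $\max$-hypothesis forces this intersection to be nonempty. But membership in that intersection does \emph{not} imply nonnegativity on $K=W\oplus\mathcal R$: with $W=\mathrm{span}\{e_1\}$, $d_0=e_2$, $P=\begin{pmatrix}0&1\\1&0\end{pmatrix}$, $Q=-P$, every $(\alpha,\beta)$ lies in your intersection (both forms vanish on $W$ and at $d_0$), yet for $(\alpha,\beta)=(1,0)$ the form $P$ takes the value $-2$ at $(-1,1)\in K$. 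So the intersection argument, as written, proves nothing about $K$; it silently drops exactly the cross term you flagged one paragraph earlier.

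What is actually needed is to work on the full subspace $V=W+\mathrm{span}\{d_0\}$ and exploit that $K$ is a closed half-space of $V$. Dines' theorem gives convexity of $\{(d^{\mathtt T}Pd,\,d^{\mathtt T}Qd):d\in V\}$, but the $\max$-hypothesis is only assumed on the half-space $K$, not on $V$; the Baccari--Trad argument bridges this by showing that the half-space restriction still admits a separating line with nonnegative normal through the origin. Your sketch gestures at this (``adding one ray direction keeps the obstruction tractable'') but never carries it out, and the interval-intersection substitute you offer at the end is not a valid replacement.
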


A sufficient condition to guarantee that $\Lambda(x^*)$ is a line segment, is to require that the 
rank of the Jacobian matrix $J(x^*)$ is row-deficient by at most one, that is, the rank is one less than the number of rows. 
The fact that the rank assumption yields the one-dimensionality of $\Lambda(x^*)$ is a simple consequence of 
the rank-nullity theorem. 
%one-dimensional in \cite{baccaritrad}.
Thus, we have the following result:
%they employed the following condition on the rank of $J(x^*)$:
\begin{theorem}[Baccari and Trad \cite{baccaritrad}] 
\label{bttheo}
Let $x^*$ be a local minimizer of \eqref{problem} such that MFCQ holds 
and the rank of the Jacobian matrix $J(x^*)\in\R^{(m+q)\times n}$ is $m+q-1$, 
where $q$ is the number of active inequality constraints at $x^*$. 
Then, there exists a Lagrange multiplier $(\lambda,\mu)\in\R^m\times\R^p_+$ such that 
WSOC holds.
\end{theorem}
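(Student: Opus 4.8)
The plan is to derive this statement as a direct corollary of Theorem~\ref{bttheorem}, taking advantage of the fact that the rank hypothesis here is precisely a sufficient condition for the structural hypothesis of Theorem~\ref{bttheorem}. First I would observe that MFCQ already guarantees $\Lambda(x^*)\neq\emptyset$ and that $\Lambda(x^*)$ is compact (this is the standard Gauthier--type consequence of MFCQ used throughout the excerpt). Next I would show that $\Lambda(x^*)$ is an affine set of dimension at most one. Writing the KKT stationarity equation in matrix form, the set of all multipliers $(\lambda,\mu)$ with the correct sign and complementarity conditions is the intersection of an affine solution set of the linear system $J(x^*)^{\mathtt T}(\lambda,\mu)=-\nabla f(x^*)$ (restricted to the active block) with the polyhedral cone $\{\mu\geq 0\}$; by the rank--nullity theorem, since $J(x^*)$ has rank $m+q-1$ and $m+q$ rows, its transpose has a one-dimensional kernel, so the affine solution set of this linear system is a line (a translate of that kernel). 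Hence $\Lambda(x^*)$, being the intersection of a line with a closed convex set and being itself bounded and nonempty, is a (possibly degenerate) bounded line segment.

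Once $\Lambda(x^*)$ is known to be a line segment, I would invoke Theorem~\ref{bttheorem} with the particular choice of first-order cone $K:=S(x^*)$. We need $S(x^*)\subset C(x^*)$ and that $S(x^*)$ is indeed a first-order cone: the former is immediate from the defining inequalities \eqref{eqn:cone} and \eqref{eq:wcone} (an equality constraint is a special case of an inequality), and the latter holds because $S(x^*)$ is a subspace, which is trivially the direct sum of a subspace and the ray $\{0\}$. Theorem~\ref{bttheorem} then yields a multiplier $(\lambda,\mu)=(\lambda^{S(x^*)},\mu^{S(x^*)})\in\Lambda(x^*)$ with $d^{\mathtt T}\nabla^2_{xx}L(x^*,\lambda,\mu)d\geq 0$ for all $d\in S(x^*)$. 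Since this multiplier lies in $\Lambda(x^*)$, it satisfies the KKT conditions, so the pair $(\lambda,\mu)$ certifies WSOC at $x^*$ exactly as in Definition~\ref{def:wsoc}.

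There is essentially no serious obstacle here: the content of the theorem is entirely carried by Theorem~\ref{bttheorem} and Yuan's Lemma behind it, and the present statement just packages the convenient rank condition that implies ``$\Lambda(x^*)$ is a line segment''. The only point requiring a small amount of care is the passage from ``the linear stationarity system has a one-dimensional solution set'' to ``$\Lambda(x^*)$ is a line segment'': one must note that intersecting the solution line with the sign constraints $\mu\geq 0$ (and discarding directions of $g$ that are not active) keeps it an affine piece of a line, and that MFCQ forces this piece to be bounded, so it is genuinely a segment rather than a ray or a full line, which is the hypothesis Theorem~\ref{bttheorem} demands.
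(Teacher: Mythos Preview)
Your proposal is correct and follows essentially the same route the paper sketches: use the rank-nullity theorem to conclude that $\operatorname{Ker}(J(x^*)^{\mathtt T})$ is one-dimensional, hence $\Lambda(x^*)$ is (under MFCQ) a bounded line segment, and then apply Theorem~\ref{bttheorem} with the first-order cone $K=S(x^*)$. The only cosmetic remark is that the boundedness/compactness of $\Lambda(x^*)$ under MFCQ is usually attributed to Gauvin, not ``Gauthier''.
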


%We have stated a simplified version of the theorem with the first-order cone $K$ 
%replaced with $S(x^*)$ to relate it with the Conjecture. 
%Note that the assumption of the theorem is that the row-rank of 
%the jacobian matrix is deficient of at most one together with MFCQ. 

Another line of reasoning in order to arrive at second-order optimality conditions 
is to use Janin's version of the classical Constant Rank theorem (\cite{spivak}, Theorem 2.9). 
See \cite{janin, aes2, param}.

\begin{theorem}[Constant Rank]
\label{ranktheo}
Let $x^*\in\Omega$ and $d\in C(x^*)$. 
Let $E\subset\{1,\dots,p\}$ be the set of indices $j$
such that $\nabla g_j(x^*)^\mathtt{T}d=0, j\in A(x^*)$. 
If $\{\nabla h_i(x), i\in\mathcal{E}; \nabla g_j(x), j\in E\}$ 
has constant rank in a neighborhood of $x^*$, then, there are $\varepsilon>0$ and a twice continuously differentiable 
function $\xi:(-\varepsilon,\varepsilon)\to\R^n$  
such that $\xi(0)=x^*, \xi'(0)=d, h_i(\xi(t))=0, i\in\mathcal{E}; g_j(\xi(t))=0, j\in E$ for 
$t\in(-\varepsilon,\varepsilon)$ and $g(\xi(t))\leq0$ for $t\in[0,\varepsilon)$.
\end{theorem}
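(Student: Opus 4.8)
\medskip
\noindent\textbf{Proof sketch.} The plan is to derive Theorem~\ref{ranktheo} from the classical Rank Theorem (see \cite{spivak}) by packaging the active equality-type constraints into a single constant-rank map, and then to treat the remaining active inequalities by a first-order argument. First, introduce the $C^2$ map $\Phi\colon\R^n\to\R^{m+|E|}$, $\Phi(x):=\big((h_i(x))_{i\in\mathcal{E}},(g_j(x))_{j\in E}\big)$. By construction $\Phi(x^*)=0$, the rows of the Jacobian $\Phi'(x)$ are exactly the vectors $\{\nabla h_i(x):i\in\mathcal{E}\}\cup\{\nabla g_j(x):j\in E\}$, so the hypothesis says $\Phi'$ has some constant rank $r$ on a neighborhood of $x^*$; moreover, since $d\in C(x^*)$ and $j\in E$ precisely when $\nabla g_j(x^*)^\mathtt{T}d=0$, we have $\Phi'(x^*)d=0$.

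Next I would invoke the classical Rank Theorem for $\Phi$ at $x^*$: there are open neighborhoods $U\ni x^*$ and $V\ni 0$ and $C^2$ diffeomorphisms $\alpha\colon U\to\alpha(U)\subset\R^n$, $\beta\colon V\to\beta(V)\subset\R^{m+|E|}$ with $\alpha(x^*)=0$, $\beta(0)=0$, such that on a smaller neighborhood of $0$ the composition $\beta\circ\Phi\circ\alpha^{-1}$ equals the linear map $(y_1,\dots,y_n)\mapsto(y_1,\dots,y_r,0,\dots,0)$. Hence, locally, $\Phi(x)=0$ exactly when the first $r$ coordinates of $\alpha(x)$ vanish, and --- since $(\beta^{-1})'(0)$ is invertible --- $\Phi'(x^*)d=0$ forces the first $r$ coordinates of $w:=\alpha'(x^*)d$ to vanish. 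Defining $\xi(t):=\alpha^{-1}(tw)$ for $|t|<\varepsilon$ with $\varepsilon$ small enough that $tw$ remains in the region where the normal form holds, I get a $C^2$ curve with $\xi(0)=x^*$, $\xi'(0)=(\alpha^{-1})'(0)\,\alpha'(x^*)d=d$ and $\Phi(\xi(t))\equiv0$; that is, $h_i(\xi(t))=0$ for $i\in\mathcal{E}$ and $g_j(\xi(t))=0$ for $j\in E$ on $(-\varepsilon,\varepsilon)$.

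Finally I would shrink $\varepsilon$ once more to ensure $g_j(\xi(t))\le0$ for all $j\in\mathcal{I}$ and $t\in[0,\varepsilon)$: for $j\notin A(x^*)$ this is immediate from $g_j(x^*)<0$ and continuity; for $j\in E$ we already have $g_j(\xi(t))=0$; and for $j\in A(x^*)\setminus E$ we have $g_j(\xi(0))=0$ while $\frac{d}{dt}g_j(\xi(t))\big|_{t=0}=\nabla g_j(x^*)^\mathtt{T}d<0$ (strictly, because $d\in C(x^*)$ and $j\notin E$), so $g_j(\xi(t))<0$ for $t\in(0,\varepsilon)$ with $\varepsilon$ small. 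I expect the main obstacle to be the Rank-Theorem step: one must make sure that the classical Rank Theorem returns $C^2$ coordinate changes (not merely $C^1$) out of a $C^2$ map $\Phi$ --- which is true because its proof only invokes the Inverse and Implicit Function Theorems, both of which preserve the order of differentiability --- and this is also precisely where the constant-rank hypothesis is indispensable, since a mere upper bound on the rank would not turn the common zero set of the active constraints into a manifold admitting $d$ as a tangent vector.
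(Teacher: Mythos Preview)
The paper does not supply its own proof of Theorem~\ref{ranktheo}; it states the result and attributes it to Janin's adaptation of the classical Rank Theorem (citing \cite{spivak}, Theorem~2.9), with the $C^2$ regularity of $\xi$ deferred to \cite{param}. Your sketch is therefore not competing against an in-paper argument but against the standard proof in those references, and it matches that standard line faithfully: package the equality-type constraints into a constant-rank map, invoke the Rank Theorem to straighten $\Phi$ locally into a coordinate projection, build the curve as a straight line in the new coordinates, and then handle the remaining active inequalities by a first-order sign argument.

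Your argument is correct as written. The chain-rule computation showing that $\Phi'(x^*)d=0$ forces the first $r$ components of $w=\alpha'(x^*)d$ to vanish is right, the curve $\xi(t)=\alpha^{-1}(tw)$ has the claimed properties, and your treatment of the three cases $j\notin A(x^*)$, $j\in E$, $j\in A(x^*)\setminus E$ is exactly the intended one (the strict inequality $\nabla g_j(x^*)^\mathtt{T}d<0$ for $j\in A(x^*)\setminus E$ indeed follows from $d\in C(x^*)$ and the definition of $E$). Your remark about $C^2$ regularity --- that the Rank Theorem inherits the smoothness class of $\Phi$ because its proof rests on the Inverse/Implicit Function Theorems --- is precisely the point that the paper outsources to \cite{param}, so you have filled in the one detail the paper leaves to a reference.
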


The proof that the function $\xi$ is twice continuously differentiable was done in \cite{param}.\\

Using a constant rank assumption jointly with MFCQ, in \cite{ams2}, 
Andreani, Mart\'{\i}nez and Schuverdt have proved the existence of multipliers satisfying WSOC 
at a local minimizer as stated below. This joint condition was also used in the convergence 
analysis of a second-order augmented Lagrangian method. 

\begin{theorem}[Andreani, Mart\'{\i}nez and Schuverdt \cite{ams2}]
\label{ams2}
Let $x^*$ be a local minimizer of \eqref{problem} with MFCQ holding at $x^*$. 
Assume that the rank of the Jacobian matrix $J(x)\in\R^{(m+q)\times n}$ 
is constant around $x^*$, where $q$ is the number of active inequality constraints at $x^*$.
Then, WSOC holds at $x^{*}$.
\end{theorem}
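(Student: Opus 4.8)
The plan is to exhibit a \emph{single} multiplier that works simultaneously for every direction in the critical subspace $S(x^*)$ of \eqref{eq:wcone}. Since MFCQ holds, $\Lambda(x^*)\neq\emptyset$; fix any $(\lambda,\mu)\in\Lambda(x^*)$. I claim WSOC holds with this very multiplier, i.e.\ that $d^{\mathtt T}\nabla^2_{xx}L(x^*,\lambda,\mu)d\ge 0$ for all $d\in S(x^*)$ — and in fact that this quantity turns out to be independent of the chosen multiplier. The mechanism is the classical one: given a fixed $d\in S(x^*)$, produce a feasible $C^2$ arc through $x^*$ with velocity $d$ along which every active constraint remains exactly active, then differentiate the objective twice at $t=0$ and use that $x^*$ is a local minimizer.

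First I would record two elementary facts about $d\in S(x^*)$. Using the stationarity relation in \eqref{eqn:wsoc1} to write $\nabla f(x^*)=-\sum_i\lambda_i\nabla h_i(x^*)-\sum_j\mu_j\nabla g_j(x^*)$, together with $\nabla h_i(x^*)^{\mathtt T}d=0$, $\nabla g_j(x^*)^{\mathtt T}d=0$ for $j\in A(x^*)$, and $\mu_j=0$ for $j\notin A(x^*)$, we get $\nabla f(x^*)^{\mathtt T}d=0$; in particular $S(x^*)\subseteq C(x^*)$. Next, for such a $d$ the index set $E$ in Theorem \ref{ranktheo} is the whole active set $A(x^*)$, so the family $\{\nabla h_i(x):i\in\mathcal{E}\}\cup\{\nabla g_j(x):j\in E\}$ is precisely the set of rows of $J(x)$, which has constant rank around $x^*$ by hypothesis. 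Theorem \ref{ranktheo} then supplies $\varepsilon>0$ and a twice continuously differentiable $\xi:(-\varepsilon,\varepsilon)\to\R^n$ with $\xi(0)=x^*$, $\xi'(0)=d$, $h_i(\xi(t))\equiv 0$ for $i\in\mathcal{E}$, $g_j(\xi(t))\equiv 0$ for $j\in A(x^*)$, and $g(\xi(t))\le 0$ for $t\in[0,\varepsilon)$; hence $\xi(t)\in\Omega$ for $t\in[0,\varepsilon)$.

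Now put $\varphi(t):=f(\xi(t))$, a $C^2$ function with $\varphi'(0)=\nabla f(x^*)^{\mathtt T}d=0$. Since $x^*$ is a local minimizer and $\xi(t)$ is feasible for small $t\ge0$, we have $\varphi(t)\ge\varphi(0)$ there, so $\varphi''(0)\ge 0$. Differentiating $t\mapsto h_i(\xi(t))\equiv 0$ and $t\mapsto g_j(\xi(t))\equiv 0$ (for $j\in A(x^*)$) twice at $t=0$ yields $d^{\mathtt T}\nabla^2 h_i(x^*)d+\nabla h_i(x^*)^{\mathtt T}\xi''(0)=0$ and $d^{\mathtt T}\nabla^2 g_j(x^*)d+\nabla g_j(x^*)^{\mathtt T}\xi''(0)=0$, where the \emph{same} vector $\xi''(0)$ also appears in $\varphi''(0)=d^{\mathtt T}\nabla^2 f(x^*)d+\nabla f(x^*)^{\mathtt T}\xi''(0)$. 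Forming the linear combination $\varphi''(0)+\sum_i\lambda_i\bigl(d^{\mathtt T}\nabla^2 h_i(x^*)d+\nabla h_i(x^*)^{\mathtt T}\xi''(0)\bigr)+\sum_j\mu_j\bigl(d^{\mathtt T}\nabla^2 g_j(x^*)d+\nabla g_j(x^*)^{\mathtt T}\xi''(0)\bigr)$ — which is legitimate because the $j\notin A(x^*)$ terms carry $\mu_j=0$ — the coefficient of $\xi''(0)$ collapses to $\nabla_x L(x^*,\lambda,\mu)^{\mathtt T}\xi''(0)=0$ by \eqref{eqn:wsoc1}, leaving $0\le\varphi''(0)=d^{\mathtt T}\nabla^2_{xx}L(x^*,\lambda,\mu)d$. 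As $d\in S(x^*)$ was arbitrary, WSOC holds; note moreover that $\varphi''(0)$ depends only on $d$ and on $\xi$ (which was built from the constraints alone), not on $(\lambda,\mu)$.

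The only nontrivial ingredient is the existence and $C^2$ regularity of the arc $\xi$, i.e.\ Theorem \ref{ranktheo}, which is already available; beyond ensuring $\Lambda(x^*)\neq\emptyset$, MFCQ is not actually used. The single delicate point is that Theorem \ref{ranktheo} only guarantees feasibility of $\xi$ on $[0,\varepsilon)$, but this one-sided information suffices: combined with $\varphi'(0)=0$, the second-order Taylor expansion $\varphi(t)=\varphi(0)+\tfrac12\varphi''(0)t^2+o(t^2)$ and $\varphi(t)\ge\varphi(0)$ for small $t>0$ already force $\varphi''(0)\ge0$.
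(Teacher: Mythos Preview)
Your argument is correct and follows exactly the approach the paper sketches: for $d\in S(x^*)$ take $E=A(x^*)$ in Theorem~\ref{ranktheo}, obtain the $C^2$ arc $\xi$, and use that $t=0$ minimizes $f(\xi(t))$ on $[0,\varepsilon)$ together with the KKT identity to cancel the $\xi''(0)$ term. Your closing observations---that MFCQ is used only to guarantee $\Lambda(x^*)\neq\emptyset$ and that the resulting inequality holds for \emph{every} multiplier---are precisely the content of the subsequent Theorem~\ref{aes}.
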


The proof can be done using Theorem \ref{ranktheo} for $d\in S(x^*)$
and $E=\{1,\dots,p\}$, using the fact that $t=0$ is a local minimizer of $f(\xi(t)), t\geq0$.

This result was further improved in \cite{aes2}, where they noticed that MFCQ can be replaced by the non-emptyness of $\Lambda(x^*)$. This was also done independently in \cite{jye}. In fact, WSOC can be proved to hold {\it for all} Lagrange multipliers:

\begin{theorem}[Andreani, Echag\"ue and Schuverdt \cite{aes2}]
\label{aes}
Let $x^*$ be a local minimizer of \eqref{problem} such that the rank of the Jacobian matrix $J(x)\in\R^{(m+q)\times n}$ 
is constant %for $x$ 
around $x^*$, where $q$ is the number of active inequality constraints at $x^*$.
Then, every Lagrange multiplier $(\lambda,\mu)\in\R^m\times\R^p_+$ (if any exists) is such that WSOC holds.
\end{theorem}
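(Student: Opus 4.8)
The plan is to run the same one-dimensional reduction that underlies Theorem~\ref{ams2}, but now for an \emph{arbitrary} fixed multiplier, so that the conclusion holds for all of them. Assume $\Lambda(x^*)\neq\emptyset$ (otherwise there is nothing to prove), fix $(\lambda,\mu)\in\Lambda(x^*)$ and fix an arbitrary $d\in S(x^*)$; the goal is $d^\mathtt{T}\nabla^2_{xx}L(x^*,\lambda,\mu)d\geq0$. First I would note $d\in C(x^*)$: dotting $\nabla_x L(x^*,\lambda,\mu)=0$ with $d$ and using $\nabla h_i(x^*)^\mathtt{T}d=0$, $\nabla g_j(x^*)^\mathtt{T}d=0$ for $j\in A(x^*)$, together with $\mu_j=0$ for $j\notin A(x^*)$, gives $\nabla f(x^*)^\mathtt{T}d=0$. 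Since $d\in S(x^*)$, the index set $E$ associated to $d$ in Theorem~\ref{ranktheo} is exactly $A(x^*)$, and the hypothesis of the present theorem is precisely constancy of the rank of $\{\nabla h_i(x),i\in\mathcal{E};\nabla g_j(x),j\in A(x^*)\}$ near $x^*$. Hence Theorem~\ref{ranktheo} applies and yields $\varepsilon>0$ and a $C^2$ curve $\xi:(-\varepsilon,\varepsilon)\to\R^n$ with $\xi(0)=x^*$, $\xi'(0)=d$, $h_i(\xi(t))\equiv0$ for $i\in\mathcal{E}$, $g_j(\xi(t))\equiv0$ for $j\in A(x^*)$, and $g(\xi(t))\leq0$ for $t\in[0,\varepsilon)$.

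Next I would set $\varphi(t):=f(\xi(t))$ and observe that along the curve it coincides with the Lagrangian at the fixed multiplier: the terms $\lambda_i h_i(\xi(t))$ vanish for $i\in\mathcal{E}$, the terms $\mu_j g_j(\xi(t))$ vanish for $j\in A(x^*)$, and for $j\notin A(x^*)$ complementarity gives $\mu_j=0$; thus $\varphi(t)=L(\xi(t),\lambda,\mu)$ for all $t\in(-\varepsilon,\varepsilon)$. Since $\xi(t)$ is feasible for $t\in[0,\varepsilon)$ and $x^*$ is a local minimizer of $f$ on $\Omega$, the value $t=0$ is a local minimizer of $\varphi$ restricted to $[0,\varepsilon)$. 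Because $\xi$ is $C^2$, so is $\varphi$; differentiating the identity $\varphi=L(\xi(\cdot),\lambda,\mu)$ and using $\nabla_x L(x^*,\lambda,\mu)=0$ gives $\varphi'(0)=0$ and $\varphi''(0)=d^\mathtt{T}\nabla^2_{xx}L(x^*,\lambda,\mu)d$.

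It remains to extract the sign. From one-sided local minimality of $t=0$ together with $\varphi'(0)=0$, the expansion $\varphi(t)=\varphi(0)+\tfrac12\varphi''(0)t^2+o(t^2)$ forces $\varphi''(0)\geq0$: were $\varphi''(0)<0$, one would get $\varphi(t)<\varphi(0)$ for small $t>0$, contradicting minimality. Therefore $d^\mathtt{T}\nabla^2_{xx}L(x^*,\lambda,\mu)d=\varphi''(0)\geq0$. Since $d\in S(x^*)$ and $(\lambda,\mu)\in\Lambda(x^*)$ were arbitrary, WSOC holds for every Lagrange multiplier, as claimed.

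As for where the work lies: once Theorem~\ref{ranktheo} is granted, there is no genuinely hard step — the point is precisely that MFCQ in Theorem~\ref{ams2} was used only to ensure that a multiplier exists (and, if desired, that $\Lambda(x^*)$ is bounded), neither of which the above argument needs. The only places demanding care are: the bookkeeping that licenses writing $\varphi=L(\xi(\cdot),\lambda,\mu)$ (this is where complementarity for inactive indices and the fact that $\xi$ pins \emph{every} active constraint to zero are used), the appeal to the $C^2$-regularity of $\xi$ established in \cite{param} so that $\varphi''(0)$ exists and the chain rule is legitimate, and the elementary observation that one-sided minimality with vanishing first derivative already yields $\varphi''(0)\geq0$, so no two-sided path (and hence no use of both $d$ and $-d$) is required.
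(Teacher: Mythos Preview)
Your proof is correct and is precisely the argument the paper has in mind: the one-line sketch given just after Theorem~\ref{ams2} (``use Theorem~\ref{ranktheo} for $d\in S(x^*)$ and $E=A(x^*)$, then $t=0$ minimizes $f(\xi(t))$ on $t\geq0$'') together with the observation, noted in the paragraph introducing Theorem~\ref{aes}, that MFCQ was used only to guarantee $\Lambda(x^*)\neq\emptyset$. Your identity $\varphi(t)=f(\xi(t))=L(\xi(t),\lambda,\mu)$ along the curve is exactly why the argument works for every multiplier simultaneously; the paper later makes the same point from a slightly different angle in Theorem~\ref{indep}, showing directly that $d^\mathtt{T}\nabla^2_{xx}L(x^*,\lambda,\mu)d$ is independent of $(\lambda,\mu)\in\Lambda(x^*)$ under the constant-rank assumption.
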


This same technique can be employed under the Relaxed Constant Rank CQ (RCRCQ, \cite{minchenko}), that is,
$\{\nabla h_i(x),i\in\mathcal{E}; \nabla g_j(x), j\in E\}\mbox{ has constant rank around }x^*\mbox{ for every }E\subset A(x^*),$
to prove the stronger result that all Lagrange multipliers satisfy SSOC. See \cite{aes2,param}.
These results can be strengthened by replacing the use of the Constant Rank theorem by the assumption that the critical cone is a subset of the Tangent cone of a modified feasible set (Abadie-type assumptions). See details in \cite{abadie2,bomze}.

\section{The conjecture}

In this section we prove the conjecture under an additional assumption based on the smoothness of the singular value decomposition of the Jacobian matrix.
In view of Theorems \ref{bttheo} and \ref{ams2}, 
that arrives at the same second-order optimality condition under MFCQ and 
row-rank deficiency of at most one or under MFCQ and the constant rank assumption of the Jacobian $J(x)$, 
it is natural to conjecture that the same result would hold under MFCQ and 
assuming that the rank increases at most by one in a neighborhood of a local minimizer. 
This was conjectured in \cite{ams2}. 
Although an unification of both results would be interesting, 
this was a bold conjecture since the theorems have completely different proofs.

Let us first show that Baccari and Trad's result can be generalized 
in order to consider column-rank deficiency. 
The proof is a simple application of the rank-nullity theorem.

\begin{theorem}
\label{genbac}
Let $x^*$ be a local minimizer of \eqref{problem} such that MFCQ holds and the 
rank of the Jacobian matrix $J(x^*)\in\R^{(m+q)\times n}$ is $n-1$, where $q$ is the number of active inequality constraints at $x^*$. 
Then, there exists a Lagrange multiplier $(\lambda,\mu)\in\R^m\times\R^p_+$ such that WSOC holds.
\end{theorem}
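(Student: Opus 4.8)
The plan is to reduce to Baccari and Trad's Theorem~\ref{bttheo} (row-rank deficiency one) by a dimension-counting argument on the multiplier set. Under MFCQ the multiplier set $\Lambda(x^*)$ is nonempty and compact. Let $N := m+q$ be the number of rows of $J(x^*)$ and recall $\mathrm{rank}\,J(x^*) = n-1$. First I would observe that the relevant second-order condition WSOC only cares about the behavior of $\nabla^2_{xx}L(x^*,\lambda,\mu)$ on the subspace $S(x^*)$, and that $S(x^*) = \ker J(x^*)$ (using the form \eqref{eqn:scone} is not even needed here since $S(x^*)$ is defined directly by the active gradients). Since $\mathrm{rank}\,J(x^*) = n-1$, the subspace $S(x^*)$ is one-dimensional, say $S(x^*) = \mathrm{span}\{d_0\}$.

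Once $S(x^*)$ is one-dimensional, WSOC becomes the single scalar requirement: there exists $(\lambda,\mu)\in\Lambda(x^*)$ with $d_0^\mathtt{T}\nabla^2_{xx}L(x^*,\lambda,\mu)d_0 \geq 0$. Now invoke Theorem~\ref{teo-bs}: applied to the critical direction $d_0 \in S(x^*) \subset C(x^*)$, it already furnishes a multiplier $(\lambda,\mu)\in\Lambda(x^*)$ with $d_0^\mathtt{T}\nabla^2_{xx}L(x^*,\lambda,\mu)d_0 \geq 0$. That multiplier then works for the entire subspace $S(x^*) = \{t d_0 : t\in\R\}$, because the quadratic form scales by $t^2 \geq 0$. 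Hence WSOC holds with that multiplier, and we are done; we do not even need the line-segment structure of $\Lambda(x^*)$ or Yuan's Lemma.

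The only genuine point to verify is that $d_0 \in C(x^*)$, i.e., that $d_0$ is a \emph{critical} direction and not merely an element of $S(x^*)$; this requires $\nabla f(x^*)^\mathtt{T} d_0 = 0$. Here I would use that $x^*$ is a KKT point (MFCQ is a CQ, so $\Lambda(x^*)\neq\emptyset$): picking any $(\lambda,\mu)\in\Lambda(x^*)$, the stationarity equation \eqref{eqn:wsoc1} gives $\nabla f(x^*) = -\sum_i \lambda_i \nabla h_i(x^*) - \sum_{j} \mu_j \nabla g_j(x^*)$, and $\mu_j = 0$ for $j\notin A(x^*)$ by complementarity; pairing with $d_0 \in S(x^*)$ kills every term on the right, so $\nabla f(x^*)^\mathtt{T} d_0 = 0$ and indeed $d_0 \in C(x^*)$. (Equivalently, since $\Lambda(x^*)\neq\emptyset$, $S(x^*)$ is the lineality space of $C(x^*)$, as noted in the excerpt.) This confirms the hypotheses of Theorem~\ref{teo-bs} and completes the proof; the whole argument is genuinely just the rank-nullity theorem plus a one-line scaling observation, with no real obstacle — the only thing to be careful about is not conflating $S(x^*)$ with $C(x^*)$ before checking the gradient condition.
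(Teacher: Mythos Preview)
Your proof is correct and follows essentially the same route as the paper: rank--nullity gives $\dim S(x^*)=1$, Theorem~\ref{teo-bs} supplies a multiplier for the single generator $d_0$, and the $t^2$ scaling extends it to all of $S(x^*)$. Your explicit check that $d_0\in C(x^*)$ via the KKT stationarity equation is a point the paper leaves implicit (relying on the earlier remark that $S(x^*)$ is the lineality space of $C(x^*)$ when $\Lambda(x^*)\neq\emptyset$), so if anything your write-up is slightly more careful; the opening sentence about reducing to Theorem~\ref{bttheo} is a red herring you yourself abandon, and could simply be deleted.
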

\begin{proof} 
Applying the rank-nullity theorem to $S(x^*)=\text{Ker}(J(x^*))$, we get that 
%Since $S(x^*)=Ker(J(x^*))$, the rank-nullity theorem implies that 
$\mbox{dim}(S(x^*))=1$. 
Hence, there is $d_0\in S(x^*)$ such that $S(x^*)=\{td_0, t\in\R\}$. 
Since MFCQ holds, Theorem \ref{teo-bs} yields (\ref{bonnans-shapiro1}).  
In particular, for $d=d_{0}$, there is a Lagrange multiplier $(\lambda,\mu)$ such that $d_0^\mathtt{T}\nabla^2_{xx} L(x^*,\lambda,\mu)d_0\geq0$. 
This same Lagrange multiplier can be used for all other directions $d=td_0\in S(x^*)$, 
since $d^\mathtt{T}\nabla^2_{xx} L(x^*,\lambda,\mu)d=t^2d_0^\mathtt{T}\nabla^2_{xx} L(x^*,\lambda,\mu)d_0\geq0$.
Thus, WSOC holds at $x^{*}$.
\end{proof}

The previous results show that the Conjecture is true in dimension less than or equal to two, 
or when there are at most two active constraints. 
In $\R^3$, the remarkable example by Arutyunov \cite{aru}/Anitescu \cite{ani} shows that if the rank increases by more than two around $x^*$, 
WSOC may fail for all Lagrange multipliers (also, SSOC fails).

We describe below a modification of the example, given in \cite{Baccari2004}, 
since it gives nice insights about the Conjecture.
\begin{example}
$$\begin{array}{ll}
\mbox{Minimize }&x_3,\\
&x_3\geq 2\sqrt{3}x_1x_2-2x_2^2,\\
&x_3\geq x_2^2-3x_1^2,\\
&x_3\geq -2\sqrt{3}x_1x_2-2x_2^2.\\
\end{array}$$
\end{example}
Here $x^*=(0,0,0)$ is a global minimizer. The critical subspace is the whole plane $x_3=0$ and $\Lambda(x^*)$ is 
the simplex $\mu_1+\mu_2+\mu_3=1, \mu_j\geq0, j=1,2,3$. 
Figure 1 shows the graph of the right-hand side of each constraint, 
where the feasible set is the set of points above all surfaces. 
Note that along every direction in the critical cone, 
there is a convex combination of the constraints that moves upwards and (\ref{bonnans-shapiro1}) holds, 
but for any convex combinations of the constraints, 
there exists a direction in the critical cone that moves downwards. 
This means that WSOC fails for all Lagrange multipliers. Since in this 
example $C(x^*)=S(x^*)$, this means that SSOC also fails for all Lagrange multipliers. 
Note also in Figure 1 that around $x^*$ there is no feasible curve such that all 
constraints are active along this curve, which is the main property allowing 
the proof of WSOC under constant rank assumptions (Theorem \ref{ranktheo}).

\begin{figure}[h]
\label{baccari}
\begin{center}
\includegraphics[scale=0.7]{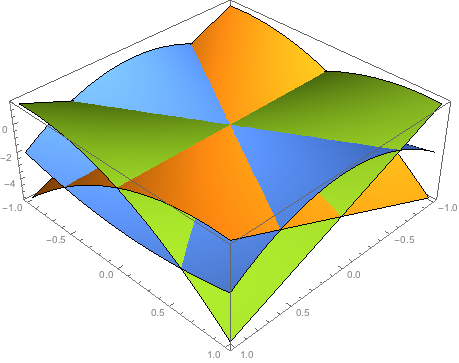}
\caption{MFCQ alone is not enough to ensure the validity of SSOC or WSOC.}
\end{center}
\end{figure}

Before describing our proof, we briefly point out previous attempts of solving the Conjecture. 
In \cite{chineses}, the authors stated the validity of the Conjecture with the additional following assumption:\\

{\bf Assumption (A3) \cite{chineses}}: \\
If there exists a sequence $\{x^k\}$ converging to $x^*$ 
such that the rank of $J(x^k)$ is $r+1$ for all $k$, then for any $x^k$ and any subset $E\subset A(x^*)$, 
the rank of $\{\nabla h_i(x), i\in\mathcal{E}; \nabla g_j(x), j\in E\}$ is constant around $x^k$.\\

The proof of the Conjecture under (A3) in \cite{chineses} is based on the following incorrect Lemma:\\

{\bf Lemma 3.4 from \cite{chineses}}: Under (A3) and the assumptions of the Conjecture, 
whenever there exists a sequence $\{x^k\}$ converging to $x^*$ such that the rank of $J(x^k)$ is $r+1$ 
for all $k$, there exists some index set $E\subset A(x^*)$ such that the rank of $\{\nabla g_j(x^k), j\in E\}$ 
is equal to $1$ and the rank of $\{\nabla h_i(x), i\in\mathcal{E}; \nabla g_j(x), j\in A(x^*)\backslash E\}$ 
is $r$ for an infinite number of indices $k$.

The following counter-example shows that it is incorrect.

\begin{counter}
$$\begin{array}{ll}
\mbox{Minimize }&0,\\
\mbox{subject to }&x_1\leq0,\\
&x_1+x_2x_1^2+x_2^3/3\leq0,\\
&2x_1+x_2x_1^2+x_2^3/3\leq0,
\end{array}$$
at $x^*=(0,0)$. 
\end{counter}

Clearly, $x^*=(0,0)$ is a local minimizer that fulfills MFCQ, 
the rank of the Jacobian is $1$ at $x^*$ and at most $2$ at every other point.
Also, for any subset of $A(x^*)=\{1,2,3\}$, the rank of the associated gradients
is constant in the neighborhood of every point $x$ different from $x^*$, hence all 
assumptions of Lemma 3.4 from \cite{chineses} are fulfilled. 
A simple inspection shows that for every point $x$ different from $x^*$ one can not separate the three
gradients into two subsets of rank $1$ as the lemma states. In fact, every subset with two
gradients will have rank $2$. 

Another attempt to solve the Conjecture is given in \cite{minch}.
Here, the following problem is presented as a counter-example for the Conjecture:
\begin{equation*}
  %\mbox{minimize} f(x)
            \begin{array}{lll}
           \mbox{minimize   } & -x_1^2-x_2, \\ 
           \mbox{subject to } & 2x_1^2+x_2\leq0,\\
           & -x_1^2+x_2\leq0,\\
           & x_2\leq0.                            
            \end{array}
\end{equation*}
The point $x^*:=(0,0)$ is a local minimizer that satisfies MFCQ, the rank of $J(x^*)$ is $1$ and increases at most to $2$ around $x^*$. In \cite{minch}, the authors only show that WSOC does not need to hold for all Lagrange multipliers. In particular, it was shown that it does not hold at $\bar{\mu}:=(0,1,0)$. 
Clearly, it does not disprove the Conjecture since there are other Lagrange multipliers that fulfill WSOC, as implied by Theorem \ref{genbac}. For instance, $\mu:=(1,0,0)$ satisfies WSOC.

Finally, we present our main result. 
We prove the Conjecture under an additional technical assumption on the smoothness of the singular value decomposition (SVD) of the Jacobian matrix around $x^*$.

\begin{assumption}
\label{svd}
Let $q$ be the number of active inequality constraints at $x^*$ and $J(x)\in\R^{(m+q)\times n}$ be the Jacobian matrix for $x$ near $x^*$.
We assume that there exist differentiable functions around $x^*$ given by 
$$x\mapsto U(x)\in\R^{(m+q)\times(m+q)}, \ \ x\mapsto\Sigma(x)\in\R^{(m+q)\times n}\mbox{ and }x\mapsto V(x)\in\R^{n \times n},$$
such that $J(x)=U(x)\Sigma(x)V(x)^{\mathtt{T}}$, where $\Sigma(x)$ is diagonal with diagonal elements $\sigma_1(x),\sigma_2(x),\dots,\sigma_k(x)$, 
where $k:=\min\{m+q,n\}$ and $\sigma_i(x)=0$ when $i$ is greater than the rank of $J(x)$. We assume also that $U(x^*)$ and $V(x^*)$ are matrices with non-zero orthogonal columns.
\end{assumption}

Note that only at $x=x^*$ we assume that  $U(x^*)$ and $V(x^*)$ are matrices with orthogonal columns. This implies that $U(x)$ and $V(x)$ are at least invertible matrices in a small enough neighborhood of $x^*$, but not necessarily with orthogonal columns.

Note that we do not require that the columns of the matrices $U(x^*)$ and $V(x^*)$ to be normalized, 
as in the classical SVD decomposition. In our proof, it is also not necessary to adopt the convention of non-negativeness of $\sigma_i(x)$ or that they are ordered. Without risk of confusion, we will still call this weaker decomposition as the SVD. We introduce this extra freedom in the decomposition in order to allow more easily for the differentiability of the functions.% Note also that under Assumption \ref{svd}, $\sigma_i(x)=0$ whenever $i$ is greater than the rank of $J(x)$, for $x$ near $x^*$.

\begin{theorem}\label{theo} Assume that $x^*$ is a local minimizer of (\ref{problem}) 
that fulfills MFCQ. Let $r$ be the rank of $J(x^*)$, and assume that for every $x$ in 
some neighborhood of $x^*$ the rank of $J(x)$ is at most $r+1$. 
Suppose also that Assumption \ref{svd} holds. Then, there is a Lagrange multiplier ($\lambda,\mu)$ such that WSOC holds.
\end{theorem}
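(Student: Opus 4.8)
The plan is to reduce the problem, via the smooth SVD, to a situation where we can apply either the Constant Rank theorem (Theorem~\ref{ranktheo}) along directions in $S(x^*)$, or Baccari and Trad's Yuan-type argument (Lemma~\ref{yuan} and Theorem~\ref{bttheorem}), depending on how the ``extra'' singular value behaves. Write $J(x)=U(x)\Sigma(x)V(x)^\mathtt{T}$ as in Assumption~\ref{svd}, so the columns of $V(x)$ corresponding to the zero singular values span a smooth family of subspaces approximating $S(x^*)=\mathrm{Ker}(J(x^*))$. The key object is the singular value $\sigma_{r+1}(x)$: by hypothesis it vanishes at $x^*$, and the rank bound says that $\sigma_{r+2}(x)\equiv\cdots\equiv 0$ in a neighborhood, while $\sigma_1,\dots,\sigma_r$ stay bounded away from $0$ near $x^*$. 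So the only obstruction to constant rank $r$ is the set $Z:=\{x:\sigma_{r+1}(x)=0\}$ versus its complement.

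First I would dispose of the easy case: if $x^*$ is in the interior of $Z$, i.e. $\sigma_{r+1}\equiv 0$ near $x^*$, then $J$ has constant rank $r$ and Theorem~\ref{ams2} (or Theorem~\ref{aes}) gives WSOC directly. So assume $x^*$ is a boundary point of $Z$, and there is a sequence $x^k\to x^*$ with $\sigma_{r+1}(x^k)\neq 0$, hence $\mathrm{rank}\,J(x^k)=r+1$. The second step is to use the smoothness of the decomposition to produce, near $x^*$, a single constraint function (a smooth combination of the $h_i$ and $g_j$ dictated by the left singular vector attached to $\sigma_{r+1}$, i.e.\ a row of $U(x)^{-1}$) whose gradient is exactly the ``$(r+1)$-st direction'' and whose removal leaves a family of gradients of constant rank $r$. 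Concretely: let $u(x)$ be the column of $U(x)$ indexed by $r+1$ and consider $\psi(x):= \sum_i w_i(x) h_i(x)+\sum_j w_j(x) g_j(x)$ where the coefficient vector $w(x)$ is chosen (smoothly) so that $\nabla\psi(x)=\sigma_{r+1}(x)v_{r+1}(x)$ up to lower-order terms; the point is that the remaining constraints, after this ``deflation'', have Jacobian of constant rank $r$ near $x^*$. This is where I expect the main technical work — and the main obstacle — to lie: making the deflation precise, smooth, and compatible with the inequality/equality structure (one cannot freely combine inequality constraints), and ensuring the ``leftover'' system genuinely has constant rank rather than merely rank $\leq r$.

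Third, with such a deflation in hand, I would argue as follows. On the leftover constant-rank system, the Constant Rank theorem (Theorem~\ref{ranktheo}) applies to each $d\in S(x^*)$: there is a feasible $C^2$ curve $\xi$ with $\xi(0)=x^*$, $\xi'(0)=d$ along which the leftover constraints vanish identically, hence $t\mapsto f(\xi(t))$ has a local minimum at $0$, giving $d^\mathtt{T}\nabla^2_{xx}L(x^*,\lambda,\mu)d\geq 0$ for \emph{any} multiplier $(\lambda,\mu)\in\Lambda(x^*)$ annihilating the deflated constraint as well. The directions killed only by the full system but not by the leftover one form at most a one-dimensional extra piece; on that piece I would invoke Theorem~\ref{teo-bs} to get, for a \emph{single} distinguished direction $d_0$, a multiplier making the Hessian form nonnegative, then combine the two multipliers using Lemma~\ref{yuan} on the first-order cone $S(x^*)$ (which, being a subspace, is in particular a first-order cone), exactly as in the proof of Theorem~\ref{bttheorem} — here MFCQ is used to guarantee $\Lambda(x^*)$ is compact so the supremum in the compact reformulation of \eqref{bonnans-shapiro1} is attained. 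Finally I would check that the convex combination of multipliers produced this way indeed lies in $\Lambda(x^*)$ and satisfies \eqref{eqn:wsoc1}--\eqref{eqn:wsoc2}, completing the proof. The delicate points throughout are: (i) the smooth deflation step, and (ii) verifying that the one-dimensional ``gap'' between $S(x^*)$ and the kernel of the deflated system is handled by a single application of Theorem~\ref{teo-bs}, so that Yuan's Lemma closes the argument with just two matrices.
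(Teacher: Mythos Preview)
Your approach diverges substantially from the paper's and carries a genuine gap precisely where you flag it.

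The paper never splits into cases, never deflates constraints, and never invokes Theorem~\ref{ranktheo}. Instead it differentiates the identity $\nabla g_i(x)^\mathtt{T}=\sum_{k=1}^{r+1}\sigma_k(x)[u_k(x)]_iv_k(x)^\mathtt{T}$ once to obtain an explicit expression for $\nabla^2 g_i(x^*)$, and computes $d^\mathtt{T}\nabla^2_{xx}L(x^*,\mu)d$ for $d\in S(x^*)$ directly. The crux is purely algebraic: since $v_k(x^*)^\mathtt{T}d=0$ for $k\leq r$ (orthogonality of the columns of $V(x^*)$) and $\sigma_{r+1}(x^*)=0$, almost every term vanishes, and when one writes $\mu=\bar\mu+\sum_{j=r+1}^{p}t_ju_j(x^*)\in\Lambda(x^*)$ the resulting quadratic form depends on the multiplier only through the single scalar $t_{r+1}$. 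Condition \eqref{bonnans-shapiro1} then reads $\max_{t_{r+1}\in[a_*,b_*]}\theta(t_{r+1},d)\geq 0$ with $\theta$ affine in $t_{r+1}$, so the maximum is attained at an endpoint and Lemma~\ref{yuan}, applied to the two endpoint quadratic forms on the subspace $S(x^*)$, finishes the proof. The one-dimensionality that makes Yuan's Lemma applicable lives in the \emph{multiplier} dependence, not in any decomposition of the direction space.

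Your deflation route, even granting a smooth $\psi$, breaks at the feasibility step. The curve $\xi$ produced by Theorem~\ref{ranktheo} on the leftover system satisfies only the leftover constraints; the theorem's conclusion $g(\xi(t))\leq 0$ for the remaining original inequalities relies on $\nabla g_j(x^*)^\mathtt{T}d<0$ for indices outside the constant-rank subset, which fails here because $d\in S(x^*)$ annihilates every active gradient. Hence $\xi(t)$ need not be feasible for the original problem, $t=0$ need not minimize $f(\xi(t))$, and the inequality $d^\mathtt{T}\nabla^2_{xx}L(x^*,\lambda,\mu)d\geq 0$ does not follow. Moreover, your ``one-dimensional extra piece'' is oriented backwards: removing constraints can only enlarge the kernel, and since $\nabla\psi(x^*)=\sigma_{r+1}(x^*)v_{r+1}(x^*)=0$, the leftover kernel at $x^*$ coincides with $S(x^*)$ anyway, so there is no residual direction in $S(x^*)$ left over to hand to Theorem~\ref{teo-bs}.
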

\begin{proof}
Let us consider the column functions 
$U(x)=[u_1(x) \dots u_{m+q}(x)]$ and $V(x)=[v_1(x) \dots v_n(x)]$. 
Clearly, $J(x)=\sum_{k=1}^{r+1}\sigma_k(x)u_k(x)v_k(x)^\mathtt{T}$. 
To simplify the notation, let us assume that $m=0$ and $A(x^*)=\{1,\dots,p\}$, 
that is, $q=p$, hence, it holds that

$$\nabla g_i(x)^\mathtt{T}=\sum_{k=1}^{r+1}\sigma_k(x)[u_k(x)]_iv_k(x)^\mathtt{T}, \ \ i=1,\dots,p,$$

where $[u]_i$ is the $i$-th coordinate of the vector $u$ of 
appropriate dimension. 

Since the functions are smooth we can compute derivatives and get

$$\nabla^2 g_i(x)=
  \sum_{k=1}^{r+1} \sigma_k(x)[u_k(x)]_iJ_{v_k}(x)+
  \sum_{k=1}^{r+1}([u_k(x)]_i\nabla\sigma_k(x)+\sigma_k(x)\nabla[u_k(x)]_i)v_k(x)^\mathtt{T}, \ \ i=1,\dots,p,$$

where $J_{v_k}(x)\in\R^{n\times n}$ is the Jacobian matrix of the function $v_k$ at $x$. 

Now, let us fix a direction $d\in S(x^*)$ and a Lagrange multiplier 
$\mu \in\Lambda(x^*)$ (we identify a Lagrange multiplier $(\lambda,\mu)$ with $\mu$ since we are assuming $m=0$).
Now, we proceed to evaluate $d^\mathtt{T}\nabla^2_{xx}L(x^*,\mu)d$. We omit the dependency on $x^*$ for simplicity. Then, 

\begin{eqnarray}
 \begin{array}{rl}
 d^\mathtt{T}\nabla^2_{xx}L(x^*,\mu)d 
 =&d^\mathtt{T}\nabla^2 fd+d^\mathtt{T}\left[\sum_{i=1}^p\mu_i\sum_{k=1}^{r+1}([u_k]_i\nabla\sigma_k+\sigma_k\nabla[u_k]_i)v_k^\mathtt{T}+\sigma_k[u_k]_iJ_{v_k}\right]d \\
 =&d^\mathtt{T}\nabla^2fd+
   \sum_{k=1}^{r+1}[(d^\mathtt{T}\nabla\sigma_k)(\mu^\mathtt{T}u_k)+\sigma_kd^\mathtt{T}J_{u_k}^\mathtt{T}\mu]v_k^\mathtt{T}d+
   \sum_{k=1}^{r+1}\sigma_k(\mu^\mathtt{T}u_k)d^\mathtt{T}J_{v_k}d.
 \end{array}  
\end{eqnarray}

From $S(x^*)=\text{Ker}(J(x^*))$, 
and from the SVD, $J(x)=U(x)\Sigma(x)V(x)^\mathtt{T}$, we can conclude that 
there are $s_{r+1},\dots,s_{n}\in\R$ such that $d=\sum_{j=r+1}^ns_jv_j$. Hence, 
from the orthogonality of $\{v_1,\dots,v_p\}$, 
we get $v_{k}^{\mathtt{T}}d=0$, $k<r+1$. 
Furthermore, since $\sigma_{r+1}=0$ we obtain that

\begin{equation}
 \label{eqn:hess}
  d^\mathtt{T}\nabla^2_{xx}L(x^*,\mu)d=
  d^\mathtt{T}\nabla^2fd+
  (d^\mathtt{T}\nabla\sigma_{r+1})(\mu^\mathtt{T}u_{r+1})(v_{r+1}^\mathtt{T}d)+
  \sum_{k=1}^{r}\sigma_k(\mu^\mathtt{T}u_k)d^\mathtt{T}J_{v_k}d.
\end{equation}

For a fixed Lagrange multiplier $\bar{\mu} \in\Lambda(x^*)$ 
(note that MFCQ ensures non-emptyness of $\Lambda(x^*)$), we can write $\Lambda(x^*)=(\bar{\mu}+\text{Ker}(J(x^*)^\mathtt{T}))\cap\R^p_+$, hence, there are $t_{r+1},\dots,t_p\in\R$ such that $\mu=\bar{\mu}+\sum_{j=r+1}^{p} t_ju_j$ and we can 
write \eqref{eqn:hess} as

\begin{equation}
\label{eqn:hesst}
d^\mathtt{T}\nabla^2_{xx}L(x^*,\mu)d=
d^\mathtt{T}\nabla^2fd+(d^\mathtt{T}\nabla\sigma_{r+1})(\bar{\mu}^\mathtt{T}u_{r+1}+t_{r+1})(v_{r+1}^\mathtt{T}d)+
\sum_{k=1}^{r}\sigma_k(\bar{\mu}^\mathtt{T}u_k)d^\mathtt{T}J_{v_k}d.
\end{equation}
Observe that for a fixed $d\in S(x^*)$, the value of $d^\mathtt{T}\nabla^2_{xx} L(x^*,\mu)d$, for $\mu \in\Lambda(x^*)$, depends on a single parameter $t_{r+1}$. 
%Since $\Lambda(x^*)$ is a convex and compact set, $t_{r+1}$ must vary in a compact interval, say, $[a,b]$.
Since MFCQ holds, condition (\ref{bonnans-shapiro1}) holds, and we may write it as

\begin{equation}
\label{aru2}
\max_{\mu \in \Lambda(x^{*})} d^\mathtt{T}\nabla^2_{xx} L(x^*,\mu)d \geq0, \ \ \forall d\in S(x^*).
\end{equation}

In virtue of the fact that the value of $d^\mathtt{T}\nabla^2_{xx}L(x^*,\mu)d$ depends only on one parameter, 
in order to apply Yuan's lemma, we will rewrite \eqref{aru2} as a maximization problem over a line segment.
%and \eqref{eqn:hesst}
%In orderline segment
For that purpose, we define the set
\begin{equation}
\label{eqn:m}
M:=\{(t_{r+1},t_{r+2},\dots,t_{p}) \in \mathbb{R}^{p-r} \text{ such that } 
\bar{\mu}+\sum_{j=r+1}^{p} t_{j}u_{j} \in \Lambda(x^{*}) \},
\end{equation}
and the following optimization problems
\begin{equation}
 a_{*}:= \text{ Inf } \{ t_{r+1}:(t_{r+1},t_{r+2},\dots,t_{p}) \in M \},
\end{equation}
and 
\begin{equation}
 b_{*}:= \text{ Sup } \{ t_{r+1}:(t_{r+1},t_{r+2},\dots,t_{p}) \in M \}.
\end{equation}
Both values $a_{*}$ and $b_{*}$ are finite and attained since $M$ is a compact set.
Furthermore, $M$ is a compact convex set. It is easy to see, that $M$ is convex and closed since
$\Lambda(x^{*})$ is also convex and closed. To show that $M$ is bounded, suppose by contradiction, that there is a sequence
$(t_{r+1}^{k},\dots,t_{p}^{k}) \in M$ with $T_{k}:=\max\{|t^{k}_{r+1}|,\dots,|t_{p}^{k}|\} \rightarrow \infty$.
Since $\Lambda(x^{*})$ is bounded, there is a scalar $K$ such that 
%\begin{equation}
 $\|\bar{\mu}+\sum_{j=r+1}^{p} t^{k}_{j}u_{j} \| \leq K$, 
 $\forall k \in \mathbb{N}$.
%\end{equation}
Dividing this expression by $T_{k}$ and taking an adequate subsequence, 
we conclude that there are $\bar{t}_{r+1},\dots, \bar{t}_{p}$ not all zero, such that 
$\sum_{j=r+1}^{p} \bar{t}_{j}u_{j}=0$, which is a contradiction with the linear independence of $\{u_{r+1},\dots,u_{p}\}$.
Thus, $M$ must be a compact convex set.

Finally, denote by
$\theta(t_{r+1},d):=d^\mathtt{T}\nabla^2fd+
                     (\bar{\mu}^\mathtt{T}u_{r+1}+t_{r+1})(d^\mathtt{T}\nabla\sigma_{r+1})(v_{r+1}^\mathtt{T}d)+
\sum_{k=1}^{r}\sigma_k(\bar{\mu}^\mathtt{T}u_k)d^\mathtt{T}J_{v_k}d$. 
We see from \eqref{eqn:hesst}, %we note 
that 
$\theta(t_{r+1},d)$ coincides with 
$d^\mathtt{T}\nabla^2_{xx}L(x^*,\mu)d$ whenever $\mu=\bar{\mu}+\sum_{j=r+1}^{p}t_{j}u_{j}$.

Now, consider the optimization problem: 
\begin{equation}
\label{eqn:theta}
 \max \{ \theta(t_{r+1},d): t_{r+1} \in [a_{*},b_{*}]\}.
\end{equation}
From \eqref{aru2}, we get $\max \{ \theta(t_{r+1},d): t_{r+1} \in [a_{*},b_{*}]\}\geq0$ for all $d \in S(x^{*})$.
%By \eqref{eqn:hesst}, we o
Observe that $\theta(t,d)$ is linear in $t$ and for each $t$ fixed, it defines 
a quadratic form as function of $d$.
Then, since  $\theta(t,d)$ is linear in $t$, 
the maximum of \eqref{eqn:theta} is attained either at $t_{r+1}=a_{*}$ or $t_{r+1}=b_{*}$.
For simplicity, let us call the quadratic forms $\theta(a_{*},d)$ and
$\theta(b_{*},d)$ by $d^\mathtt{T}Pd$ and $d^\mathtt{T}Qd$, respectively.
Thus, we arrive at
\begin{equation}
 \label{eqn:yuan2}
 \max\{ d^\mathtt{T}Pd, d^\mathtt{T}Qd\}\geq0,\ \  \forall d\in S(x^*).
\end{equation}

Applying Yuan's Lemma (Lemma \ref{yuan}), 
we get the existence of $\alpha\geq0$ and 
$\beta\geq0$ with $\alpha+\beta=1$ such that 

\begin{equation}
\label{eqn:yuan3} 
 d^\mathtt{T}(\alpha P+\beta Q)d\geq0, \ \ \forall d\in S(x^*). 
\end{equation}

Additionally, due to the linearity of $\theta(t,d)$, we see that $\eta:=\alpha a_{*}+\beta b_{*}$ 
satisfies $\theta(\eta,d)=d^\mathtt{T}(\alpha P+\beta Q)d$. % where $\eta:=\alpha a_{*}+\beta b_{*} \in [a_{*},b_{*}]$.
%Now, since $[a_{*}, b_{*}]=\pi_{r+1}(M)$ where 
Denote $\pi_{r+1}(t_{r+1},\dots,t_{p}):=t_{r+1}$ the projection
onto the first coordinate. From the continuity of $\pi_{r+1}$ and the compactness and convexity of $M$, we get $[a_{*}, b_{*}]=\pi_{r+1}(M)$. Since $\eta \in[a_{*}, b_{*}]$, we conclude that there are some scalars $\hat{t}_{r+1},\dots, \hat{t}_{p}$ with 
$\hat{t}_{r+1}=\eta$ and $\hat{\mu}:=\bar{\mu}+\sum_{j=r+1}^{p} \hat{t}_{j}u_{j} \in \Lambda(x^*)$
such that $\theta(\eta,d)=d^\mathtt{T}(\alpha P+\beta Q)d=d^\mathtt{T}\nabla^2_{xx} L(x^*,\hat{\mu})d$. 
From \eqref{eqn:yuan3}, we get 
that $\hat{\mu}$ is a Lagrange multiplier for which WSOC holds at $x^*$. 
\end{proof}

%\begin{remark}
%We note that when $r=\min\{m-1,n-1\}$, defining $F: 
%\mathbb{R}^{(m+q) \times (m+q)}\times \mathbb{R}^{r+1}\times 
%\mathbb{R}^{n \times n}\times \mathbb{R}^{(m+q) \times n} 
%\rightarrow 
%\mathbb{R}^{(m+q) \times n}$
%given by $F(U,\sigma,V,J):=U \text{Diag}(\sigma)V^{T}-J,$
%where $Diag(\sigma)_{ii}=\sigma_{i}$, $i=1,\dots,r+1$ and zero otherwise, the definition of $r$ implies that the adjoint operator $\partial F^*(U(x^*),\sigma(x^*),V(x^*),J(x^*))/\partial (U,\sigma,V)$ is injective, and Assumption \ref{svd} follows from the Implicit Function Theorem. In particular, our proof of Theorem \ref{theo} generalizes the one in \cite{baccaritrad}.
%\end{remark}

We refer the reader to the companion paper \cite{smoothSVD} for the proof that our Theorem \ref{theo} is a generalization of Theorems 
\ref{bttheo} (originally from Baccari and Trad \cite{baccaritrad}) and \ref{genbac}, 
where the rank deficiency is at most one. It is also a generalization of Theorem \ref{ams2} (originally from Andreani et al. \cite{ams2}), where the rank is constant, as long as all non-zero singular values are distinct.

In order to see that Theorem \ref{theo} provides new examples where WSOC holds, let us consider the following:
\begin{example}
$$\begin{array}{ll}
\mbox{Minimize }&x_3,\\
\mbox{subject to }&\cos(x_1+x_2)-x_ 3-1\leq0,\\
&-\cos(x_1+x_2)-x_3+1\leq0,\\
&-2x_3\leq0,
\end{array}$$
at a local minimizer $x^*=(0,0,0)$.
The Jacobian matrix for $x$ around $x^*$ is given by 
$$J(x)=\left(\begin{array}{ccc}-\sin(x_1+x_2)&-\sin(x_1+x_2)&-1\\\sin(x_1+x_2)&\sin(x_1+x_2)&-1\\0&0&-2\end{array}\right),$$ that admits the following smooth SVD decomposition:
\begin{eqnarray*}
U(x):=\left(\begin{array}{ccc}-\frac{1}{\sqrt{6}}&-\frac{1}{\sqrt{2}}&\frac{1}{\sqrt{3}}\\-\frac{1}{\sqrt{6}}&\frac{1}{\sqrt{2}}&\frac{1}{\sqrt{3}}\\-\frac{2}{\sqrt{6}}&0&-\frac{1}{\sqrt{3}}\end{array}\right), \Sigma(x):=\left(\begin{array}{ccc}\sqrt{6}&0&0\\0&2\sin(x_1+x_2)&0\\0&0&0\end{array}\right)\mbox{ and}\\
V(x)^\mathtt{T}:=\left(\begin{array}{ccc}0&0&1\\\frac{1}{\sqrt{2}}&\frac{1}{\sqrt{2}}&0\\\frac{1}{\sqrt{2}}&-\frac{1}{\sqrt{2}}&0\end{array}\right).
\end{eqnarray*} 
Clearly, MFCQ holds and the rank is $1$ at $x^*$ and increases at most to $2$ around $x^*$. 
The set $\Lambda(x^*)$ of Lagrange multipliers is defined by the relations $\mu_1+\mu_2+2\mu_3=1$ with $\mu_1,\mu_2,\mu_3\geq0$. 
Theorem \ref{theo} guarantees the existence of a Lagrange multiplier that fulfill WSOC. 
In fact, we can see that WSOC holds at $x^*$ whenever $\mu_2\geq\mu_1$.\end{example}

The next example shows, however, that our Theorem \ref{theo} in the way presented does not prove the complete conjecture, since Assumption \ref{svd} may fail.

\begin{example}
$$\begin{array}{ll}
\mbox{Minimize }&x_3,\\
\mbox{subject to }&g_1(x):=x_1x_2-x_ 3\leq0,\\
&g_2(x):=-x_1x_2-x_3\leq0,\\
&g_3(x):=-x_3\leq0,
\end{array}$$
at a local minimizer $x^*=(0,0,0)$.

The jacobian matrix at $x$ near $x^*$ is given by $J(x)=\left(\begin{array}{ccc}x_2&x_1&-1\\-x_2&-x_1&-1\\0&0&-1\end{array}\right).$
Clearly, MFCQ holds and the rank of $J(x^*)$ is $1$ and increases at most to $2$ in a neighborhood. Also, WSOC holds. Let us prove that Assumption \ref{svd} does not hold. Assume that there are differentiable functions $U(x), \Sigma(x), V(x)$ such that $J(x)=U(x)\Sigma(x)V(x)^\mathtt{T}$ for all $x$ in a neighborhood of $x^*$ as in Assumption \ref{svd}. Let $U(x)=[u_1, u_2, u_3]$ and $V(x)=[v_1,v_2,v_3]$ be defined columnwise, where the dependency on $x$ was omitted. Also, let $\sigma_1, \sigma_2, \sigma_3$ be the diagonal elements of $\Sigma(x)$. Since the rank of $J(x)$ is at most $2$, $\sigma_3\equiv0$. At $x=x^*$, since the rank is $1$, we have $\sigma_1(x^*)\neq0$ and $\sigma_2(x^*)=0$. Then, $u_1(x^*)=\frac{\alpha}{\sigma_1(x^*)}(-1,-1,-1), v_1(x^*)=\frac{1}{\alpha}(0,0,1)$ for some $\alpha\neq0$ and $u_2(x^*)\perp u_1(x^*)$, $v_2(x^*)\perp v_1(x^*)$. Now, denoting by $[w]_i, i=1,2,3$ the components of the vector $w\in\R^3$, the first two columns of the identity $U(x)\Sigma(x)V(x)^\mathtt{T}=J(x)$ for all $x$ near $x^*$  gives:
%$$\left(\begin{array}{cc}\sigma_1[u_1]_1[v_1]_1+\sigma_2[u_2]_1[v_2]_1&\sigma_1[u_1]_1[v_1]_2+\sigma_2[u_2]_1[v_2]_2\\\sigma_1[u_1]_2[v_1]_1+\sigma_2[u_2]_2[v_2]_1&\sigma_1[u_1]_2[v_1]_2+\sigma_2[u_2]_2[v_2]_2\\\sigma_1[u_1]_3[v_1]_1+\sigma_2[u_2]_3[v_2]_1&\sigma_1[u_1]_3[v_1]_2+\sigma_2[u_2]_3[v_2]_2\end{array}\right)=\left(\begin{array}{cc}x_2&x_1\\-x_2&-x_1\\0&0\end{array}\right).$$
$$\sigma_1[u_1]_i[v_1]_j+\sigma_2[u_2]_i[v_2]_j=\frac{\partial g_i(x)}{\partial x_j}, i=1,2,3, j=1,2.$$
Computing derivatives with respect to $x_1$ and $x_2$ of every entry, gives, for $i=1,2,3, j=1,2$ and $k=1,2$:
\begin{eqnarray*}
\frac{\partial\sigma_1}{\partial x_k}[u_1]_i[v_1]_j+\sigma_1\frac{\partial[u_1]_i}{\partial x_k}[v_1]_j+\sigma_1[u_1]_i\frac{\partial[v_1]_j}{\partial x_k}+\\
\frac{\partial\sigma_2}{\partial x_k}[u_2]_i[v_2]_j+\sigma_2\frac{\partial[u_2]_i}{\partial x_k}[v_2]_j+\sigma_2[u_2]_i\frac{\partial[v_2]_j}{\partial x_k}=
\frac{\partial^2 g_i(x)}{\partial x_k\partial x_j}.
\end{eqnarray*}
At $x=x^*$, since $\sigma_2(x^*)=0$ and $[v_1(x^*)]_j=0, j=1,2$ we have
\begin{eqnarray}
\label{maineq}
\sigma_1(x^*)[u_1(x^*)]_i\frac{\partial[v_1(x^*)]_j}{\partial x_k}+\frac{\partial\sigma_2(x^*)}{\partial x_k}[u_2(x^*)]_i[v_2(x^*)]_j=
\frac{\partial^2 g_i(x^*)}{\partial x_k\partial x_j}.
\end{eqnarray}
For fixed $j=1,2$ and $k=1,2$, we can multiply equation (\ref{maineq}) by $[u_1(x^*)]_i$ and add for $i=1,2,3$ to get:
\begin{eqnarray*}
\sigma_1(x^*)\|u_1(x^*)\|^2\frac{\partial[v_1(x^*)]_j}{\partial x_k}+\frac{\partial\sigma_2(x^*)}{\partial x_k}\langle u_1(x^*),u_2(x^*)\rangle[v_2(x^*)]_j=
\langle u_1(x^*),\left[\frac{\partial^2 g_i(x^*)}{\partial x_k\partial x_j}\right]_{i=1}^3\rangle.
\end{eqnarray*}
Since $\sigma_1(x^*)\neq0$, computing derivatives, % we have:
%$$\frac{\partial^2 g_1(x^*)}{\partial x_k\partial x_j}=\left(\begin{array}{cc}0&1\\1&0\end{array}\right),\frac{\partial^2 g_2(x^*)}{\partial x_k\partial x_j}=\left(\begin{array}{cc}0&-1\\-1&0\end{array}\right),\frac{\partial^2 g_3(x^*)}{\partial x_k\partial x_j}=\left(\begin{array}{cc}0&0\\0&0\end{array}\right),$$
%and 
using the definition of $u_1(x^*)$ and the fact that $u_1(x^*)\perp u_2(x^*)$, we conclude that 
$$\frac{\partial[v_1(x^*)]_j}{\partial x_k}=0, j=1,2, k=1,2.$$
Substituting back in \eqref{maineq} we have, for all $i=1,2,3, j=1,2, k=1,2$:
\begin{eqnarray*}
\frac{\partial\sigma_2(x^*)}{\partial x_k}[u_2(x^*)]_i[v_2(x^*)]_j=
\frac{\partial^2 g_i(x^*)}{\partial x_k\partial x_j}.
\end{eqnarray*}
At indices $(i,j,k)\in\{(1,1,2),(1,2,1)\}$, where the right-hand side is non-zero, we have $\frac{\partial\sigma_2(x^*)}{\partial x_1}\neq0, \frac{\partial\sigma_2(x^*)}{\partial x_2}\neq0, [v_2(x^*)]_1\neq0$ and $[v_2(x^*)]_2\neq0$. At indices $(i,j,k)\in\{(1,1,1),(2,1,1),(3,1,1)\}$, where the right-hand side is zero, we get $u_2(x^*)=0$, which is a contradiction.
\end{example}

To conclude this section we note that our proof suggests that when the rank is constant, 
the Hessian of the Lagrangian does not depend on the Lagrange multiplier. 
In fact, we can prove this without additional assumptions. 
This explains why results under constant rank conditions hold {\it for all} Lagrange multipliers.

\begin{theorem}
\label{indep}
Suppose that $\Lambda(x^{*})\neq \emptyset$. 
If the rank of $J(x)$ is constant around a point
$x^*$, then 
the quadratic form $d^\mathtt{T}\nabla^2_{xx}L(x^*,\lambda,\mu)d$ for $d\in S(x^*)$ 
does not depend on $(\lambda,\mu)\in\Lambda(x^*)$.
\end{theorem}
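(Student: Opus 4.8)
The plan is to reduce the statement to a multiplier-free identity expressing that, along directions of $S(x^*)$, the curvature of the Lagrangian is determined by the curvature of the constraint surface, and then to extract that identity from the feasible curve produced by the Constant Rank theorem (Theorem~\ref{ranktheo}). Concretely, fix $d\in S(x^*)$ and two Lagrange multipliers $(\lambda,\mu),(\tilde\lambda,\tilde\mu)\in\Lambda(x^*)$. Because $\nabla^2 f(x^*)$ contributes equally to $\nabla^2_{xx}L(x^*,\cdot)$ for both multipliers, and because complementarity forces $\mu_j=\tilde\mu_j=0$ for $j\notin A(x^*)$, the quantity $d^\mathtt{T}\nabla^2_{xx}L(x^*,\lambda,\mu)d-d^\mathtt{T}\nabla^2_{xx}L(x^*,\tilde\lambda,\tilde\mu)d$ equals
\[ d^\mathtt{T}\Big(\sum_{i\in\mathcal{E}}(\lambda_i-\tilde\lambda_i)\nabla^2 h_i(x^*)+\sum_{j\in A(x^*)}(\mu_j-\tilde\mu_j)\nabla^2 g_j(x^*)\Big)d. \]
On the other hand, subtracting the stationarity equation \eqref{eqn:wsoc1} written for the two multipliers gives
\[ \sum_{i\in\mathcal{E}}(\lambda_i-\tilde\lambda_i)\nabla h_i(x^*)+\sum_{j\in A(x^*)}(\mu_j-\tilde\mu_j)\nabla g_j(x^*)=0, \]
so everything comes down to showing that the first display vanishes whenever the gradient combination in the second display vanishes.

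To obtain this, I would apply Theorem~\ref{ranktheo} to the direction $d$. Since $\Lambda(x^*)\neq\emptyset$, one has $\nabla f(x^*)^\mathtt{T}d=0$ (as $\nabla f(x^*)$ is a combination of the $\nabla h_i(x^*)$ and $\nabla g_j(x^*)$, all annihilated by $d$), hence $d\in C(x^*)$; and for $d\in S(x^*)$ the index set $E$ in Theorem~\ref{ranktheo} equals $A(x^*)$, so the constant-rank hypothesis on $J(x)$ is precisely what that theorem requires. We thereby get a twice continuously differentiable curve $\xi:(-\varepsilon,\varepsilon)\to\R^n$ with $\xi(0)=x^*$, $\xi'(0)=d$, and $h_i(\xi(t))\equiv0$ for $i\in\mathcal{E}$, $g_j(\xi(t))\equiv0$ for $j\in A(x^*)$. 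Differentiating twice at $t=0$ yields, for each $i\in\mathcal{E}$ and $j\in A(x^*)$,
\[ d^\mathtt{T}\nabla^2 h_i(x^*)d+\nabla h_i(x^*)^\mathtt{T}\xi''(0)=0,\qquad d^\mathtt{T}\nabla^2 g_j(x^*)d+\nabla g_j(x^*)^\mathtt{T}\xi''(0)=0. \]
Forming the linear combination of these with coefficients $\lambda_i-\tilde\lambda_i$ and $\mu_j-\tilde\mu_j$ respectively, the terms involving $\xi''(0)$ collapse to $\big(\sum_{i}(\lambda_i-\tilde\lambda_i)\nabla h_i(x^*)+\sum_{j}(\mu_j-\tilde\mu_j)\nabla g_j(x^*)\big)^\mathtt{T}\xi''(0)=0$ by the second display above, leaving exactly the first display equal to $0$. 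This proves the theorem; combined with Theorem~\ref{teo-bs}, it also re-derives the fact that under constant rank all Lagrange multipliers satisfy WSOC.

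I expect the only point requiring care to be the verification that Theorem~\ref{ranktheo} genuinely applies for every $d\in S(x^*)$, i.e.\ that $d\in C(x^*)$ and that its associated index set $E$ is all of $A(x^*)$; once the $C^2$ curve $\xi$ is available --- its twice continuous differentiability, due to \cite{param}, being exactly what legitimizes the second differentiation --- the remainder is the short computation above. There is no real analytic difficulty: conceptually, the statement just says that the second-order behaviour of $L$ restricted to $S(x^*)$ is carried by the curvature of the locally smooth constraint manifold, and hence is insensitive to the choice of multiplier.
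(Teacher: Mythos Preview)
Your proof is correct and follows essentially the same route as the paper: obtain a $C^2$ feasible curve $\xi$ from the Constant Rank theorem with $E=A(x^*)$, differentiate the constraint identities twice at $t=0$, and combine them against a vector in $\mathrm{Ker}(J(x^*)^\mathtt{T})$ so that the $\xi''(0)$ term drops out. The paper phrases this with a generic $\tilde\mu\in\mathrm{Ker}(J(x^*)^\mathtt{T})$ and the representation $\Lambda(x^*)=(\bar\mu+\mathrm{Ker}(J(x^*)^\mathtt{T}))\cap\R^p_+$, while you work directly with the difference of two multipliers, but the argument is the same; your explicit check that $d\in C(x^*)$ via $\Lambda(x^*)\neq\emptyset$ is a detail the paper leaves implicit.
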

\begin{proof}
By simplicity, assume $m=0$ and $A(x^*)=\{1,\dots,p\}$. 
By Theorem \ref{ranktheo}, for each $d\in S(x^*)$, 
there exists a smooth curve $\xi(t)$, $t\in(-\varepsilon,\varepsilon)$ with $g(\xi(t))=0$ for all $t$, with $\xi(0)=x^*$ 
and $\xi'(0)=d$. 
Take $\tilde\mu\in \text{Ker}(J(x^*)^\mathtt{T})$ and let us define 
the function $R(t):=\sum_{i=1}^p\tilde\mu_i g_i(\xi(t))$, which is constantly zero for small $t$.
Straightforward calculations show 
that $R''(0)=d^\mathtt{T}\sum_{i=1}^p\tilde\mu_i\nabla^2 g_i(x^*)d+\xi''(0)^\mathtt{T}J(x^*)^\mathtt{T}\tilde\mu=0$. 
Hence, $d^\mathtt{T}\sum_{i=1}^p\tilde\mu_i\nabla^2 g_i(x^*)d=0$.

But $\Lambda(x^*)=(\bar{\mu}+\text{Ker}(J(x^*)^\mathtt{T}))\cap\R^p_+$ for a fixed 
Lagrange multiplier $\bar\mu \in\Lambda(x^*)$. 
Hence  $\mu \in\Lambda(x^*)$ if, and only if, $\mu=\bar\mu+\tilde\mu$, for some $\tilde\mu\in \text{Ker}(J(x^*)^\mathtt{T})$, 
with $\bar \mu+ \tilde \mu \geq0$. 
It follows that $d^\mathtt{T}\nabla^2_{xx}L(x^*,\mu)d=d^\mathtt{T}\nabla^2_{xx}L(x^*,\bar\mu)d$, as we wanted to show.
Observe that $x^{*}$ is not necessarily a local minimizer, we only require $\Lambda(x^{*})\neq \emptyset$.
\end{proof}

Despite our focus on conditions implying WSOC, the above analysis allows us to obtain conclusions about SSOC, 
related with \cite[Theorem 5.1]{baccaritrad}. Recall 
that the {\it generalized strict complementary slackness} (GSCS) condition holds at the feasible point $x^{*}$ if there exists, at most, one index $i_{*} \in A(x^{*})$ such that $\mu_{i_{*}}=0$ whenever
$(\lambda, \mu) \in \Lambda(x^{*})$.

\begin{theorem}\label{theo:main2} Assume that $x^*$ is a local minimizer of (\ref{problem}) 
that fulfills MFCQ. Let $r$ be the rank of $J(x^*)$, and assume that for every $x$ in 
some neighborhood of $x^*$ the rank of $J(x)$ is at most $r+1$. 
Suppose also that Assumption \ref{svd} and GSCS hold at $x^*$. Then, there is a Lagrange multiplier ($\lambda,\mu)$ such that SSOC holds.
\end{theorem}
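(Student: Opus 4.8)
The plan is to mirror the proof of Theorem \ref{theo}, but instead of arranging for a quadratic inequality on the subspace $S(x^*)$, arrange for it on the full critical cone $C(x^*)$, and then invoke Yuan's Lemma in its cone form (Lemma \ref{yuan}) for a suitable \emph{first-order cone}. The role played by GSCS is precisely to make $C(x^*)$ a first-order cone. First I would observe that, under GSCS, either $\mu_i>0$ for all $i\in A(x^*)$ and every $(\lambda,\mu)\in\Lambda(x^*)$, in which case $C(x^*)=S(x^*)$ by the description \eqref{eqn:scone} and there is nothing new to prove beyond Theorem \ref{theo}; or there is a unique index $i_*$ with $\mu_{i_*}=0$ for some (hence, by the affine structure of $\Lambda(x^*)$, one should check, for an appropriate) multiplier, and then from \eqref{eqn:scone} the cone $C(x^*)$ equals $\{d\in S'(x^*)\mid \nabla g_{i_*}(x^*)^{\mathtt T}d\le 0\}$ where $S'(x^*)=\{d\mid \nabla h_i^{\mathtt T}d=0,\ \nabla g_j^{\mathtt T}d=0,\ j\in A(x^*)\setminus\{i_*\}\}$. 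Since $S(x^*)$ is a hyperplane of $S'(x^*)$ (codimension one, because $\nabla g_{i_*}(x^*)$ is not in the span of the other active gradients — this is where one uses that $i_*$ is the \emph{only} index that can vanish, together with MFCQ), $C(x^*)$ is the direct sum of the subspace $S(x^*)$ and a ray in $S'(x^*)$, i.e. a first-order cone.

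The second step is to redo the Hessian computation of the proof of Theorem \ref{theo} but on all of $C(x^*)$ rather than on $S(x^*)$, and to check that the same "single parameter" phenomenon persists. The key structural fact I would reuse is that for $d\in S(x^*)=\mathrm{Ker}(J(x^*))$ one has $v_k^{\mathtt T}d=0$ for $k\le r$ and $\sigma_{r+1}=0$ at $x^*$, which collapses the expression for $d^{\mathtt T}\nabla^2_{xx}L(x^*,\mu)d$ to \eqref{eqn:hess}; I would need the analogous collapse for $d\in C(x^*)\setminus S(x^*)$. Here one must be more careful since such $d$ need not lie in $\mathrm{Ker}(J(x^*))$: it satisfies $J(x^*)d = \nabla g_{i_*}(x^*)^{\mathtt T}d \cdot e_{i_*}$ with this scalar $\le 0$. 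Expanding $d$ in the basis $\{v_j\}$ and using $J(x^*)=\sum_{k\le r}\sigma_k u_k v_k^{\mathtt T}$, one still controls the components $v_k^{\mathtt T}d$ for $k\le r$ in terms of that single scalar, and the dependence of $d^{\mathtt T}\nabla^2_{xx}L(x^*,\mu)d$ on $\mu\in\Lambda(x^*)$ still goes through the single coefficient $t_{r+1}$ (the multiplier of $u_{r+1}$), because the remaining directions $u_{r+2},\dots,u_p$ of $\mathrm{Ker}(J(x^*)^{\mathtt T})$ are annihilated by $J_{v_k}$-free terms exactly as before. Thus, exactly as in the proof of Theorem \ref{theo}, for fixed $d\in C(x^*)$ the function $t_{r+1}\mapsto d^{\mathtt T}\nabla^2_{xx}L(x^*,\mu)d$ is affine, and by Theorem \ref{teo-bs} (which holds on the critical cone $C(x^*)$, not merely on $S(x^*)$) we get $\max_{t_{r+1}\in[a_*,b_*]}\theta(t_{r+1},d)\ge 0$ for all $d\in C(x^*)$.

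The third step is then identical in spirit to the final part of the proof of Theorem \ref{theo}: the maximum of an affine function over $[a_*,b_*]$ is attained at an endpoint, so setting $P,Q$ to be the quadratic forms $\theta(a_*,\cdot)$ and $\theta(b_*,\cdot)$, we have $\max\{d^{\mathtt T}Pd,d^{\mathtt T}Qd\}\ge 0$ for all $d\in C(x^*)$. Now $C(x^*)$ is a first-order cone by the first step, so Lemma \ref{yuan} applies on $K=C(x^*)$ and yields $\alpha,\beta\ge0$, $\alpha+\beta=1$, with $d^{\mathtt T}(\alpha P+\beta Q)d\ge0$ for all $d\in C(x^*)$; by affineness $\alpha P+\beta Q=\nabla^2_{xx}L(x^*,\hat\mu)d$ for $\hat\mu$ corresponding to $\hat t_{r+1}=\alpha a_*+\beta b_*$, and $\hat\mu\in\Lambda(x^*)$ because $[a_*,b_*]=\pi_{r+1}(M)$ with $M$ compact convex. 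Hence SSOC holds at $\hat\mu$. The main obstacle I anticipate is the second step — verifying that the "single parameter" reduction of the Hessian survives on $C(x^*)\setminus S(x^*)$, i.e. that moving off the kernel of $J(x^*)$ in the one extra critical direction does not reintroduce dependence on the other multiplier coordinates $t_{r+2},\dots,t_p$; this requires re-examining which terms in the expansion of $\nabla^2 g_i$ survive when $v_k^{\mathtt T}d\neq 0$ for some $k\le r$, and using that $\mu^{\mathtt T}u_k$ is constant over $\Lambda(x^*)$ for $k\le r$ (since $u_{r+2},\dots,u_p$ span $\mathrm{Ker}(J(x^*)^{\mathtt T})$ modulo $u_{r+1}$, which is itself orthogonal to $u_k$, $k\le r$, only at $x^*$ — so one must track exactly where orthogonality at $x^*$ is and is not needed).
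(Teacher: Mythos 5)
Your proposal takes essentially the same route as the paper: GSCS is used only to make $C(x^*)$ a first-order cone (the paper gets this by citing \cite[Theorem 5.1]{baccaritrad} and \cite[Definition 3.3 and Lemma 3.3]{abadie2} rather than by your direct decomposition argument), and then Yuan's Lemma is applied with $K=C(x^*)$ so that the machinery of Theorem \ref{theo} carries over unchanged. The obstacle you flag in your second step --- verifying that the one-parameter dependence of $d^\mathtt{T}\nabla^2_{xx}L(x^*,\mu)d$ on the multiplier persists for critical directions outside $S(x^*)$, where $v_k^\mathtt{T}d\neq0$ for some $k\leq r$ --- is not treated in the paper either, whose proof consists solely of the first-order-cone observation plus the assertion that the rest follows ``in the same lines'' of Theorem \ref{theo}.
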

\begin{proof}
From \cite[Theorem 5.1]{baccaritrad} or \cite[Definition 3.3 and Lemma 3.3]{abadie2}, 
it follows that GSCS implies that $C(x^*)$ is a first-order cone, 
hence, we can still apply Yuan's Lemma and prove the result in the same lines of Theorem \ref{theo}.
\end{proof}

\section{Final remarks}

In order to analyse limit points of a sequence generated by a second-order algorithm, 
one usually relies on WSOC, the stationarity concept based on the critical subspace, 
the lineality space of the cone of critical directions. 
Most conditions guaranteeing WSOC at local minimizers are based on a constant rank assumption on the Jacobian matrix. 
In this paper we developed new tools to deal with the non-constant rank case, by partially solving 
a conjecture formulated in \cite{ams2}. 
Possible future lines of research includes %developing conditions on the constraint functions in order to guarantee the additional technical assumption on the smoothness of the SVD or 
investigating the full conjecture using generalized notions of derivative. We believe this can be done since under the rank assumption, the so-called ``crossing'' of singular values is controlled, at least when the non-zero ones are simple, which is the main source of non-continuity of singular vectors. 
Our approach also opens the path to obtaining new second-order results without assuming MFCQ and/or to developing conditions that ensure SSOC, the second-order stationarity concept based on the true critical cone.

\section*{Acknowledgements}

We thank L. Minchenko for a discussion around Theorem 2.6 after a first draft of this paper was released. This research was funded by FAPESP grants 2013/05475-7 %tematico
and 2016/02092-8%BPE Gabriel
, by CNPq grants 454798/2015-6, %alberto bolsa IMPA
303264/2015-2 %produtividade Gabriel
and 481992/2013-8, %universal Gabriel
and CAPES. %bolsa Daiana

\bibliographystyle{plain}   % (uses file "plain.bst")
\bibliography{biblio_conj_nino}

\begin{thebibliography}{10}

\bibitem{abadie2}
R.~Andreani, R.~Behling, G.~Haeser, and P.~J.~S. Silva.
\newblock On second order optimality conditions for nonlinear optimization.
\newblock {\em Optimization Methods \& Software}, 32, Issue 1, 2017.

\bibitem{abms2}
R.~Andreani, E.~G. Birgin, J.~M. Martinez, and M.~L. Schuverdt.
\newblock Second-order negative-curvature methods for box-constrained and
  general constrained optimization.
\newblock {\em Computational Optimization and Applications}, 45:209--236, 2010.

\bibitem{aes2}
R.~Andreani, C.~E. Echag\"ue, and M.~L. Schuverdt.
\newblock Constant-rank condition and second-order constraint qualification.
\newblock {\em Journal of Optimization theory and Applications},
  146:2:255--266, 2010.

\bibitem{akkt2}
R.~Andreani, G.~Haeser, A.~Ramos, and P.J.S. Silva.
\newblock A second-order sequential optimality condition associated to the
  convergence of optimization algorithms.
\newblock {\em to appear in IMA Journal of Numerical Analysis}.

\bibitem{rcpld}
R.~Andreani, G.~Haeser, M.~L. Schuverdt, and P.~J.~S. Silva.
\newblock A relaxed constant positive linear dependence constraint
  qualification and applications.
\newblock {\em Mathematical Programming}, 135:255--273, 2012.

\bibitem{cpg}
R.~Andreani, G.~Haeser, M.~L. Schuverdt, and P.~J.~S. Silva.
\newblock Two new weak constraint qualifications and applications.
\newblock {\em SIAM Journal on Optimization}, 22:1109--1135, 2012.

\bibitem{ams2}
R.~Andreani, J.~M. Mart\'{\i}nez, and M.~L. Schuverdt.
\newblock On second-order optimality conditions for nonlinear programming.
\newblock {\em Optimization}, 56:529--542, 2007.

\bibitem{amrs2}
R.~Andreani, J.M. Mart\'{\i}nez, A.~Ramos, and P.J.S. Silva.
\newblock Strict constraint qualifications and sequential optimality conditions
  for constrained optimization.
\newblock {\em Optimization-online}, 2015.

\bibitem{amrs}
R.~Andreani, J.M. Mart\'{\i}nez, A.~Ramos, and P.J.S. Silva.
\newblock A cone-continuity constraint qualification and algorithmic
  consequences.
\newblock {\em SIAM Journal on Optimization}, 26(1):96--110, 2016.

\bibitem{ani}
M.~Anitescu.
\newblock Degenerate nonlinear programming with a quadratic growth condition.
\newblock {\em SIAM Journal on Optimization}, 10:4:1116--1135, 2000.

\bibitem{aru}
A.~V. Arutyunov.
\newblock Second-order conditions in extremal problems. the abnormal points.
\newblock {\em Transactions of American Mathematical Society}, 350:4341--4365,
  1998.

\bibitem{arupereira}
A.~V. Arutyunov and F.~L. Pereira.
\newblock Second-order necessary optimality conditions for problems without a
  priori normality assumptions.
\newblock {\em Mathematics of Operations Research}, 31:1:1--12, 2006.

\bibitem{Baccari2004}
A.~Baccari.
\newblock {On the Classical Necessary Second-Order Optimality Conditions}.
\newblock {\em Journal of Optimization Theory and Applications},
  123(1):213--221, 2004.

\bibitem{baccaritrad}
A.~Baccari and A.~Trad.
\newblock On the classical necessary second-order optimality conditions in the
  presence of equality and inequality constraints.
\newblock {\em SIAM Journal on Optimization}, 15(2):394--408, 2005.

\bibitem{bazaraa}
M.~S. Bazaraa, H.~D. Sherali, and C.~M. Shetty.
\newblock {\em Practical methods of Optimization: theory and algorithms}.
\newblock John Wiley and Sons, 2006.

\bibitem{bental}
A.~Ben-Tal.
\newblock Second-order and related extremality conditions in nonlinear
  programming.
\newblock {\em Journal of Optimization Theory and Applications}, 31:2, 1980.

\bibitem{bertsekasnl}
D.~P. Bertsekas.
\newblock {\em Nonlinear programming}.
\newblock Athenas Scientific, 1999.

\bibitem{bomze}
I.~Bomze.
\newblock Copositivity for second-order optimality conditions in general smooth
  optimization problems.
\newblock {\em Optimization}, 2015.

\bibitem{bcshapiro}
J.~F. Bonnans, R.~Cominetti, and A.~Shapiro.
\newblock Second-order optimality conditions based on parabolic second order
  tangent sets.
\newblock {\em SIAM Journal of Optimization}, 9:466--492, 1999.

\bibitem{bshapiro}
J.~F. Bonnans and A.~Shapiro.
\newblock {\em Pertubation Analysis of Optimization Problems}.
\newblock Springer-Verlag, 2000.

\bibitem{bss}
R.~H. Byrd, R.~B. Schnabel, and G.~A. Shultz.
\newblock A trust region algorithm for nonlinearly constrained optimization.
\newblock {\em SIAM Journal of Numerical Analysis}, 24:1152--1170, 1987.

\bibitem{casast}
E.~Casas and F.~Troltzsch.
\newblock Second-order necessary and sufficient optimality conditions for
  optimization problems and applications to control theory.
\newblock {\em SIAM Journal of Optimization}, 13:2:406--431, 2002.

\bibitem{cly}
T.~F. Coleman, J.~Liu, and W.~Yuan.
\newblock A new trust-region algorithm for equality constrained optimization.
\newblock {\em Computacional Optimization and Applications}, 21:177--199, 2002.

\bibitem{conntoint2}
A.~R. Conn, N.~I.~M. Gould, D.~Orban, and P.~L. Toint.
\newblock A primal-dual trust-region algorithm for minimizing a non-convex
  fucntion subject to general inequality and linear equality constraints.
\newblock In {\em Nonlinear optimization and related topics (G. Di Pillo and F.
  Giannessi, eds.)}, pages 15--30, Kluwer Academic Publishers 1999.

\bibitem{crouzeixyuan}
J.~P. Crouzeix, J.~E. Martinez-Legal, and A.~Seeger.
\newblock An alterntive theorem for quadratic forms and extensions.
\newblock {\em Linear Algebra and its applications}, 215:121--134, 1995.

\bibitem{baccariwcr}
M.~Daldoul and A.~Baccari.
\newblock An application of matrix computations to classical second-order
  optimality conditions.
\newblock {\em Optimization Letters}, 3:547--557, 2009.

\bibitem{dennisalem}
J.~E. Dennis, M.~El-Alem, and M.~C. Maciel.
\newblock A global convergence theory for general trust-region-based algorithms
  for equality constrained optimization.
\newblock {\em SIAM Journal of Optimization}, 7:1:177--207, 1997.

\bibitem{dennisv}
J.~E. Dennis and L.~N. Vicente.
\newblock On the convergence theory of trust-region-based algorithms for
  equality-constrained optimization.
\newblock {\em SIAM Journal of Optimization}, 7:4:927--950, 1997.

\bibitem{dipillo}
G.~DiPillo, S.~Lucidi, and L.~Palagi.
\newblock Convergence to second-order stationary points of a primal-dual
  algorithm model for nonlinear programming.
\newblock {\em Mathematics of Operations Research}, 30:897--915, 2005.

\bibitem{dubo}
A.~Ya. Dubovitskii and A.~A. Milyutin.
\newblock Extremum problems in the presence of restrictions.
\newblock {\em USSR Computational Mathematics and Mathematical Physics},
  5:3:1--80, 1965.

\bibitem{facchinei}
F.~Facchinei and S.~Lucidi.
\newblock Convergence to second-order stationary points in inequality
  constrained optimization.
\newblock {\em Mathematics of Operations Research}, 23:746--766, 1998.

\bibitem{fiaccomc}
A.~V. Fiacco and G.~P. McCormick.
\newblock {\em Nonlinear Programming Sequential Unconstrained Minimization
  Techniques}.
\newblock John Wiley, 1968.

\bibitem{fletcher}
R.~Fletcher.
\newblock {\em Practical methods of Optimization: Constrained Optimization}.
\newblock Wiley, 1981.

\bibitem{gfrererd}
H.~Gfrerer.
\newblock Second-order necessary conditions for nonlinear optimization with
  abstract constraints: the degenerate case.
\newblock {\em SIAM Journal on Optimization}, 18:2:589--612, 2007.

\bibitem{grSQP}
P.~E. Gill, V.~Kungurtsev, and D.~P. Robinson.
\newblock A stabilized {SQP} method: global convergence.
\newblock {\em IMA Journal of Numerical Analysis}, 2016.
\newblock {DOI}: 10.1093/imanum/drw004.

\bibitem{gould}
F.~J. Gould and J.~W. Tolle.
\newblock A necessary and sufficient qualification for constrained
  optimization.
\newblock {\em SIAM Journal of Applied Mathematics}, 20:164--172, 1971.

\bibitem{tointexample}
N.~I.~M. Gould, Conn, and P.~L. Toint.
\newblock A note on the convergence of barrier algorithms for second-order
  necessary points.
\newblock {\em Mathematical programming}, 85:433--438, 1998.

\bibitem{jye}
L.~Guo, G.-H. Lin, and J.J. Ye.
\newblock Second-order optimality conditions for mathematical programs with
  equilibrium constraints.
\newblock {\em Journal of Optimization Theory and Applications}, 158:33--64,
  2013.

\bibitem{conjnino2}
G.~Haeser.
\newblock An extension of {Y}uan's {L}emma and its applications in
  optimization.
\newblock {\em Journal of Optimization Theory and Applications}, 2017.
\newblock {DOI}: 10.1007/s10957-017-1123-2.

\bibitem{smoothSVD}
G.~Haeser and A.~Ramos.
\newblock A note on the smoothness of multi-parametric singular value
  decomposition with applications in optimization.
\newblock {\em Optimization Online}, 2017.

\bibitem{janin}
R.~Janin.
\newblock Direction derivative of the marginal function in nonlinear
  programming.
\newblock {\em Mathematical Programming Studies}, 21:127--138, 1984.

\bibitem{levitin}
E.~S. Levitin, A.~A. Milyutin, and N.~P. Osmolovskii.
\newblock Conditions of high order for a local minimum in problems with
  constraints.
\newblock {\em Russian Mathematical Surveys}, 33:97--168.

\bibitem{martinezyuan}
J.~E. Martinez-Legaz and A.~Seeger.
\newblock Yuan's alternative theorem and the maximization of the minimum
  eigenvalue function.
\newblock {\em Journal of Optimization Theory and Applications}, 82,No 1, 1994.

\bibitem{minch}
L.~Minchenko and A.~Leschov.
\newblock On strong and weak second-order necessary optimality conditions for
  nonlinear programming.
\newblock {\em Optimization}, 65, Issue 9, 2016.

\bibitem{minchenko}
L.~Minchenko and S.~Stakhovski.
\newblock On relaxed constant rank regularity condition in mathematical
  programming.
\newblock {\em Optimization}, 60:4:429--440, 2011.

\bibitem{param}
L.~Minchenko and S.~Stakhovski.
\newblock Parametric nonlinear programming problems under the relaxed constant
  rank condition.
\newblock {\em SIAM Journal on Optimization}, 21:314--332, 2011.

\bibitem{moguerza}
J.~M. Moguerza and F.~J. Prieto.
\newblock An augmented lagrangian interior-point method using directions of
  negative curvature.
\newblock {\em Mathematical Programming}, 95(3):573--616, 2003.

\bibitem{murtykabadi}
K.~G. Murty and S.~N. Kabadi.
\newblock Some {NP}-complete problems in quadratic and nonlinear programming.
\newblock {\em Mathematical Programming}, 39:2:117--129, 1987.

\bibitem{nocedal}
J.~Nocedal and S.~J. Wright.
\newblock {\em Numerical optimization}.
\newblock Springer, 2006.

\bibitem{pardalos}
P.~Pardalos and G.~Schnitger.
\newblock Checking local optimality in constrained quadratic programming is
  {NP}-hard.
\newblock {\em Operations Research Letter}, 7:33--35, 1988.

\bibitem{penotsot}
J.~P. Penot.
\newblock Second-order conditions for optimization problems with constraints.
\newblock {\em SIAM Journal of Control and Optimization}, 37:303--318, 1998.

\bibitem{rwets}
R.~T. Rockafellar and R.~Wets.
\newblock {\em Variational Analysis}.
\newblock Series: Grundlehren der mathematischen Wissenschaften, Vol. 317,
  2009.

\bibitem{chineses}
C.~Shen, W.~Xue, and Y.~An.
\newblock A new result on second-order necessary conditions for nonlinear
  programming.
\newblock {\em Operations Research Letters}, 43:117--122, 2015.

\bibitem{spivak}
M.~Spivak.
\newblock {\em Calculus on Manifolds: A Modern Approach to Classical Theorems
  of Advanced Calculus}.
\newblock Addison-Wesley Publishing Company, 1965.

\bibitem{yuan}
Y.~Yuan.
\newblock On a subproblem of trust region algorithms for constrained
  optimization.
\newblock {\em Mathematical Programming}, 47:53--63, 1990.

\end{thebibliography}

\end{document}